\documentclass[reqno,12 pt]{amsart}

\usepackage{amsfonts,amscd}
\usepackage{amsthm}
\usepackage{amssymb}
\usepackage{latexsym}
\usepackage{wasysym}
\usepackage{cite}
\usepackage{lineno}
\usepackage{stmaryrd}
\usepackage{url}
\usepackage{bm}
\usepackage{amsmath}
\usepackage{resizegather}
\usepackage{verbatim}
\usepackage{graphicx}
\usepackage{epsf}
\usepackage{enumerate}
\usepackage{geometry}
\usepackage{hyperref}
\hypersetup{breaklinks=true,
pagecolor=white,
colorlinks=true,
citecolor=blue,%
filecolor=black,%
linkcolor=blue,%
urlcolor=blue
}

\theoremstyle{plain} 
\newtheorem{thm}{Theorem}
\newtheorem{lem}[thm]{Lemma}

\theoremstyle{definition}
\newtheorem{remark}[thm]{Remark}
\newtheorem{expl}[thm]{Example}

\newcommand{\wt}{\widetilde}

\newcommand{\Id}{\textnormal{Id}}

\newcommand{\R}{\mathbb{R}}

\newcommand{\N}{\mathbb{N}}

\newcommand{\wh}{\widehat}

\numberwithin{equation}{section}
\newcommand{\bref}[1]{\textbf{\ref{#1}}} 
\newcommand{\beqref}[1]{\textbf{(\ref{#1})}} 

\date{July 28, 2026. }

\begin{document}

\title[Boundedness of infinite products of relaxed projections]{On the boundedness of infinite products of relaxed projections: perturbations resilience and dynamic string-averaging}


\author{Daniel Reem}
\address{The Center for Mathematics and Scientific Computation (CMSC), University of Haifa, Mt. Carmel, Haifa 3498838, Israel}
\email{dream@math.haifa.ac.il}

\author{Yair Censor}
\address{Department of Mathematics, University of Haifa, Mt. Carmel, Haifa 3498838, Israel}
\email{yair@math.haifa.ac.il}



\subjclass[2020]{90C25, 47B02, 47H09, 46C07, 49K40, 90C59}

\keywords{Affine subspace, boundedness, control, dynamic string-averaging, half-space, infinite product, innately regular, perturbation resilience, relaxed projection, strongly nonexpansive mapping}

\begin{abstract}
Very recently (2026), Bauschke and Tung extended from finite- to infinite-dimensional Hilbert spaces a result published by Meshulam in 1996 (following an earlier result of Aharoni-Duchet-Wajnryb from 1984) regarding the boundedness of infinite products of relaxed projections onto a finite family of closed affine subspaces. In the present note we extend in various ways the result of Bauschke and Tung by allowing certain perturbations and proving perturbation resilience, by considering a mixture of closed half-spaces and closed hyperplanes, and by using dynamic weighted sums of dynamic strings (of dynamic lengths) of relaxed projections in the iterative process. We also discuss the limitation to generalize the Bauschke-Tung result to arbitrary closed and convex sets by presenting a large family of counterexamples in which the associated control is not cyclic and not even almost cyclic. Along the way we establish a general theorem of independent interest regarding the uniform boundedness and uniform unboundedness of infinite products of nonexpansive mappings in a normed space setting. 
\end{abstract}

\maketitle

\section{Introduction}\label{sec:Intro}
\subsection{Background:} Following the celebrated results of von Neumann regarding the so-called method of alternating projections onto closed linear subspaces in a Hilbert space \cite[Lemma 22, p. 475]{vonNeumann1949jour}, \cite[Theorem 13.7, pp. 55--56]{vonNeumann1950book} (results which were published in 1949--1950, but were discovered and used already in 1933--1935), numerous results related to infinite products of projections onto various subsets in various settings have been published by many authors, until this very date; see, for instance, the following very partial list of references and the references therein: \cite{AmemiyaAndo1965jour,BauschkeBorwein1993jour,
BauschkeBorwein1996jour,BauschkeBorweinLewis1997inproc,BauschkeCombettes2017book,
BauschkeMatouskovaReich2004jour,BehlingBello-CruzSantos2021jour,Browder1958jour,Cegielski2012book,CheneyGoldstein1959jour,
CensorZaknoon2018jour,Deutsch1979incol,Deutsch1983incol,Deutsch2001book,DeutschHundal2008jour,
Dye1989jour-a,Dye1989jour-b,DyeKhamsiReich1991jour,DyeReich1991jour,EskandariMoslehian2026jour,
FranchettiLight1984jour,FranchettiLight1986jour,GubinPolyakRaik1967jour,GunturkThao2023jour,
Halperin1962jour,Hundal2004jour,KayalarWeinert1988jour,KopeckaMuller2014jour,
KopeckaPaszkiewicz2017jour,KopeckaReich2004jour, KopeckaReich2012jour,KosmolZhou1991jour,Nakano1953book,Oppenheim2018jour,
Paszkiewicz2012prep,Pinkus2015jour,Prager1960jour,PustylnikReichZaslavski2011jour,
PustylnikReichZaslavski2012jour,PustylnikReichZaslavski2012jour-b,Stiles1965a-jour,Stiles1965b-jour,Wiener1955jour}.

One of these results was published in 1984 by Aharoni, Duchet and Wajnryb \cite[the Theorem on page 134]{AharoniDuchetWajnryb1984jour}. In a nutshell, this interesting result says that given a finite number of (necessarily closed) affine subspaces in a Euclidean space, if one starts at any point in the space, projects it onto one (arbitrary) of the affine subspaces, then projects the resulting point onto another (arbitrary) affine subspace from the family, and continues this process indefinitely so that one obtains an infinite product of projections onto the given affine subspaces, then this product is bounded. The proof of this result was complicated, and a simplified and neat version of it was published in 1996 by Meshulam \cite[Theorem 1]{Meshulam1996jour}, who was also able to present explicit estimates on the bound in terms of the initial point and the family of affine subspaces. 

At the end of his paper, Meshulam noted that  after Aharoni saw his (Meshulam's) new proof of \cite[the Theorem on page 134]{AharoniDuchetWajnryb1984jour}, Aharoni observed that Meshulam's method of proof can be used in order to establish a more general result, in which the orthogonal projections onto the affine subspaces are replaced by relaxed orthogonal projections. This more general result was formulated without a proof as \cite[Theorem 2]{Meshulam1996jour}. 

The proofs of both \cite[the Theorem on page 134]{AharoniDuchetWajnryb1984jour} and \cite[Theorem 1]{Meshulam1996jour} were heavily based on the finite-dimensional setting, a fact that has hindered for about 30 years the publication of any infinite-dimensional generalization of their results. Very recently, the situation has changed, when Bauschke and Tung have been able to generalize in a clever way \cite[Theorem 2]{Meshulam1996jour} and published the following result (see the penultimate paragraph of Section \bref{sec:Preliminaries} for the definition of an innately regular family of closed affine subspaces  that is used in Theorem \bref{thm:BauschkeTung} below; note that, as explained there, when the subspaces in the family are finite-dimensional, then the family automatically becomes innately regular):

\begin{thm}\label{thm:BauschkeTung} {\bf (Bauschke-Tung \cite[Corollary 4.3]{BauschkeTung2026prep} (2026)} Suppose that $X$ is a real Hilbert space, $m\in\N$ and $\lambda\in (0,2)$. Suppose further that $\mathcal{A}:=(A_i)_{i=1}^m$ is a family of $m$ closed affine subspaces in $X$ which is innately regular. Then there is a universal positive constant $\mu_{\mathcal{A},\lambda}>0$, depending on $\lambda$ and on the family $\mathcal{A}$, which has the following property: for each $k\in\N\cup\{0\}$, each $\lambda_k\in [0,\lambda]$, each control $c:\N\cup\{0\}\to I:=\{1,2,\ldots,m\}$, and each $x^0\in X$, if 
\begin{equation}\label{eq:x^(k+1)}
x^{k+1}:=T_{k}(x^k),
\end{equation}
where $T_k:X\to X$ is defined by $T_k(x):=(1-\lambda_k)x+\lambda_k P_{A_{c(k)}}(x)$ for all $x\in X$ and $P_{A_{c(k)}}$ is the orthogonal projection operator onto the closed affine subspace $A_{c(k)}$ (in other words, $x^{k+1}$ is the relaxed projection of $x^k$ onto the closed affine subspace $A_{c(k)}$), then one has $\|x^k\|\leq \|x^0\|+\lambda \mu_{\mathcal{A},\lambda}$. Stated differently, if we let 
\begin{equation}\label{eq:R_A_lambda}
\mathcal{R}_{\mathcal{A},\lambda}:=\{S:X\to X\,|\, S=(1-\tau)\Id+\tau P_{A},\,\tau\in [0,\lambda],\,\textnormal{and}\,A\in \{A_i\,|\,i\in I\}\},
\end{equation}
where $\Id$ is the identity map, then there is a universal positive constant $\mu_{\mathcal{A},\lambda}>0$, depending on $\lambda$ and on the family $\mathcal{A}$, such that for all $k\in\N$, all $x^0\in X$ and all $k$ operators $S_{k-1},S_{k-2},\ldots, S_0$ from the set $\mathcal{R}_{\mathcal{A},\lambda}$, we have $\|S_{k-1}S_{k-2}\cdots S_0(x^0)\|\leq \|x^0\|+\lambda \mu_{\mathcal{A},\lambda}$.
\end{thm}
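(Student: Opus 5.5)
Since the theorem is quoted from \cite[Corollary 4.3]{BauschkeTung2026prep}, the natural route is to reconstruct a Meshulam-type induction on the number $m$ of affine subspaces, carried out in infinite dimensions with innate regularity replacing compactness. The aim is a statement that is invariant under translation and then specialized. Precisely, I would prove: there is $\mu=\mu_{\mathcal{A},\lambda}$ such that for every $z\in\bigcap_i A_i$ (when this is nonempty) and every product $S$ of operators from $\mathcal{R}_{\mathcal{A},\lambda}$, we have $\|S(x)-z\|\le \|x-z\|+\lambda\mu$, with a constant that behaves well under taking subfamilies.

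\textbf{Step 1 (consistent case).} If $\bigcap_i A_i\neq\emptyset$, pick $z$ in the intersection. Each relaxed projection with parameter in $[0,\lambda]\subset[0,2)$ is nonexpansive and fixes $z$, so $\|x^k-z\|\le\|x^0-z\|$. Hence $\|x^k\|\le\|x^0\|+2\|z\|$. Choosing $z$ of minimal norm (or zero after translation) gives the bound.

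\textbf{Step 2 (induction on $m$ in the inconsistent case).} Decompose each $A_i=a_i+L_i$ with $L_i$ a closed linear subspace, and let $L=\bigcap_i L_i$. Write $X=L\oplus L^\perp$. Every relaxed projection acts as the identity on the $L$-component up to a fixed translation absent in the $L$ direction, so it suffices to bound the $L^\perp$-components. Within $L^\perp$, I would split the iteration into maximal blocks according to which indices are used.

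\begin{itemize}
\item If a block uses only a proper subfamily $J\subsetneq I$, the induction hypothesis applies to $(A_j)_{j\in J}$, whose innate regularity is inherited.
\item If a block uses all indices, the key is a quantitative "drift toward the intersection" estimate: whenever $\operatorname{dist}(x,L)$ (suitably normalized) is large, a full sweep through all the $A_i$ must strictly decrease the norm by an amount proportional to the distance.
\end{itemize}

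Innate regularity supplies exactly this. It gives the linear-regularity inequality $\operatorname{dist}(x,\bigcap_{j\in J}L_j)\le \kappa\max_{j\in J}\operatorname{dist}(x,L_j)$ for every subfamily $J$, and the closedness of sums needed to make the affine offsets bounded. Combining this with the strong nonexpansiveness inequality for relaxed projections,
\[
\|T x-y\|^2\le\|x-y\|^2-\tfrac{2-\lambda}{\lambda}\|x-Tx\|^2 \quad (y\in A),
\]
yields a potential that grows by at most a bounded amount over each block. The bound is $\lambda$ times a constant, because every step moves by $\lambda_k\operatorname{dist}(x^k,A_{c(k)})$.

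\textbf{Main obstacle.} The hardest step is the inconsistent case. There the points can escape in directions where the subspaces are "nearly parallel", and the induction must show that the constant from subfamilies does not accumulate over infinitely many block switches. I would try Meshulam's trick: measure the norm relative to a carefully chosen point $z_J$ that is nearest simultaneously (in a least-squares sense) to the affine spaces in $J$. I would then use the regularity constant $\kappa$ to show that once $\|x\|$ exceeds a threshold $R_J$, the iteration behaves like the homogeneous (linear) iteration, which is norm-nonincreasing. This yields a bound of the form $\max(\|x^0\|,R)+\lambda\mu$, which rearranges into the stated form.
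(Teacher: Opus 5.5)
\textbf{There is a genuine gap: the inconsistent case is not proved.}

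The paper does not prove this statement; it quotes it from Bauschke--Tung \cite[Corollary 4.3]{BauschkeTung2026prep}. So your sketch has to stand on its own, and it does not. Your consistent case ($\|x^k\|\le\|x^0\|+2\|z\|$) and the reduction to $L^\perp$ are fine. But the inconsistent case is the entire content of the theorem. There you list ingredients and name the obstacle (the constants $\lambda\mu_J$ of proper subfamilies accumulating over infinitely many block switches), but you supply nothing that overcomes it.

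The concrete mechanisms you propose do not work as stated.
\begin{enumerate}
\item The ``drift'' claim, that a block using all indices decreases the norm by an amount proportional to the distance, is false. The theorem allows $\lambda_k\in[0,\lambda]$, so such a block may consist of steps with $\lambda_k=0$ or arbitrarily small $\lambda_k$, and hence be arbitrarily close to the identity. Meanwhile the induction hypothesis only says that each neighbouring proper-subfamily block raises the norm by at most $\lambda\mu_J$. With infinitely many alternations, nothing in your argument stops these increments from adding up.
\item The Fej\'er-type inequality you quote holds only relative to points of the set currently being projected onto. In the inconsistent case there is no common anchor, and a least-squares point $z_J$ is fixed by none of the maps, so it yields no monotone potential.
\end{enumerate}

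The threshold idea also fails. Write $A_i=a_i+L_i$ with $a_i\in L_i^\perp$, and set $\wt T_j:=(1-\lambda_j)\Id+\lambda_jP_{L_{c(j)}}$. The computation in the proof of Theorem~\ref{thm:SummablePerturbations} gives $x^k=\wt T_{k-1}\cdots\wt T_0x^0+v^k$, where $v^k:=\sum_{j<k}\lambda_j\wt T_{k-1}\cdots\wt T_{j+1}a_{c(j)}$. This $v^k$ is exactly the trajectory started at $0$ and does not depend on $x^0$. So the deviation of the affine iteration from the homogeneous one has nothing to do with how large $\|x\|$ is. No threshold $R_J$ makes it small, and ``tracking the linear iteration up to a bounded error'' is equivalent to the theorem itself.

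Single steps really do raise the norm at every scale. In $\R^2$ take $A_1=\{\xi\,|\,\xi_2=0\}$, $A_2=\{\xi\,|\,\xi_2=1\}$, $A_3=\{\xi\,|\,\xi_1=0\}$, so that $\bigcap_iL_i=\{0\}$. Then $P_{A_2}$ sends $(R,\tfrac12)$ to $(R,1)$ for every $R$.

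What is needed is a bound on $\sup_k\|v^k\|$ that is uniform over all controls and all $\lambda_j\in[0,\lambda]$.
\begin{itemize}
\item Since $\lambda_ja_{c(j)}=(\Id-\wt T_j)a_{c(j)}$, one has $v^k=\sum_{j<k}(\Pi_{j+1}-\Pi_j)a_{c(j)}$, where $\Pi_j:=\wt T_{k-1}\cdots\wt T_j$ and $\Pi_k:=\Id$.
\item In the consistent case, $\lambda_ja_{c(j)}=(\Id-\wt T_j)z$ for a common $z\in\bigcap_iA_i$. The sum then telescopes to $z-\Pi_0z$, which is your Step 1.
\end{itemize}
The missing idea is a quantitative estimate, derived from the innate regularity of $(L_i)_{i=1}^m$, that controls these sums when the offsets $a_{c(j)}$ switch infinitely often. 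That is what the Bauschke--Tung argument provides, and what your proposal leaves open.
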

Note that since in general the sequences generated in Theorem \bref{thm:BauschkeTung} (and in its finite-dimensional predecessors \cite[the Theorem on page 134]{AharoniDuchetWajnryb1984jour}, \cite[Theorem 1, Theorem 2]{Meshulam1996jour}) are not Fej\'er monotone with respect to $\cap_{i=1}^m A_i$ (for instance, because it may happen that $\cap_{i=1}^m A_i=\emptyset$), it is not at all clear that these sequences must be bounded, and by a uniform bound as provided in Theorem \bref{thm:BauschkeTung} (see also \cite[The proof of Theorem 1]{Meshulam1996jour}). We note that a bit before publishing \cite{BauschkeTung2026prep}, Bauschke and Tung published another version of Theorem \bref{thm:BauschkeTung}, again in a Hilbert space setting, in which the closed affine subspaces were replaced by polyhedral sets (that is, by finite intersections of closed half-spaces): see \cite[Theorem 3.2]{BauschkeTung2025prep}, and see other parts of \cite{BauschkeTung2025prep} for interesting related examples and counterexamples. 

It is worth mentioning a few other related results. One of them is \cite[Theorem 1]{BaranyGoodmanPollack1991inbook}, which is a version of \cite[the Theorem on page 134]{AharoniDuchetWajnryb1984jour}, \cite[Theorem 1]{Meshulam1996jour} established by B\'ar\'any, Goodman and Pollack in which the setting is a possibly infinite family of non-parallel lines having the property that the family of pairwise intersections of these lines is bounded. Another result is \cite[Theorem 3]{BaranyGoodmanPollack1991inbook} in which \cite[Theorem 1]{BaranyGoodmanPollack1991inbook} was generalized to any Euclidean space of dimension 2 and above under some assumptions on the infinitely many (of the continuum cardinal) lines. A third result is of Stiles \cite[pp. 28--29]{Stiles1965a-jour}, in which one can find a counterexample in a non-Hilbert space setting with two closed linear subspaces where the obtained sequences are unbounded despite the fact that the subspaces intersect at the origin. See also \cite{AmaldiHauser2005jour,BauschkeTung2026prep-b,BlockLevin1970jour,Efron1964tech-report,MinskyPapert2017book,Ridgway1962phd} and the references therein for results related to the boundedness of the so-called perceptron algorithm and variants of it applied to infeasible and feasible systems of linear and convex inequalities.  

\subsection{Contributions:} In this note we extend in various ways Theorem \bref{thm:BauschkeTung} by allowing certain perturbations to appear in this theorem (see Theorems \bref{thm:BoundedPerturbations}--\bref{thm:SummablePerturbations}), by allowing a mixture of closed half-spaces and closed hyperplanes (see Theorem \bref{thm:Halfspaces}), and by allowing dynamic weighted strings of dynamic lengths of relaxes projections instead of the relaxed projections (see Theorem \bref{thm:BoundedString}). We also discuss the limitation on generalizing Theorem \bref{thm:BauschkeTung} to arbitrary closed and convex sets by presenting a large family of counterexamples in which the associated control is not cyclic and not even almost cyclic: see Examples \bref{ex:(C1C2)k(C_3)}--\bref{ex:Aleph}. The analysis of these examples is based on a general theorem of independent interest (Theorem \bref{thm:UniformInfty}) regarding the uniform boundedness and uniform unboundedness of infinite products of nonexpansive mappings in a normed space setting. 

\subsection{Paper layout:} The paper is organized as follows. In Section \bref{sec:Preliminaries} we present preliminaries necessary for the subsequent developments. Perturbations are introduced in Section \bref{sec:Perturbations} into the overall iterative process and are shown to retain the boundedness property. In Section \bref{sec:Halfspaces} we discuss closed half-spaces and in Section \bref{sec:Dynamic-string-averaging} the scenario is expanded from plain sequential projections to dynamic string-averaging projections. Finally, in Section \bref{sec:Limitations-on-the} we show, by means of several examples, that Theorem \bref{thm:BauschkeTung} cannot be extended without further assumptions to arbitrary nonempty, closed and convex subsets.

\section{Preliminaries}\label{sec:Preliminaries}
Unless otherwise stated, our setting is a real Hilbert space $X\neq\{0\}$ with an inner product $\langle\cdot,\cdot\rangle$ and an induced norm $\|\cdot\|$. If $C\subseteq X$ is a nonempty, closed and convex subset of $X$, then $P_C:X\to X$ denotes the orthogonal (or best approximation) projection operator onto $C$, whose existence and uniqueness are well known \cite[Theorem 3.16, p. 53]{BauschkeCombettes2017book}, \cite[Theorem 1.2.3, p. 18]{Cegielski2012book}. Given a finite set $J\neq\emptyset$, a weight function on $J$ is a function $w:J\to [0,1]$ which satisfies  $\sum_{j\in J}w(j)=1$. A control with respect to a nonempty index set $I$ is a function $c$ from the set $\N\cup\{0\}$ of nonnegative integers to $I$. 

A control $c$ is said to be cyclic with period $p\in \N$ if $c(k+p)=c(k)$ for all $k\in\N\cup\{0\}$; a control $c$ is said to be almost cyclic with an almost cycle $p\in \N$ if $I\subseteq \{c(k),c(k+1),\ldots,c(k+p-1)\}$ for all $k\in\N\cup\{0\}$; a control on which there is no restriction is said to be random or chaotic. Given a sequence $(y^k)_{k=0}^{\infty}$ in $X$ and $p\in\N$, we say that $(y^k)_{k=0}^{\infty}$ is periodic with period $p$ if $y^{k+p}=y^k$ for all $k\in \N\cup\{0\}$; we say that $(y^k)_{k=0}^{\infty}$ is almost-periodic with an almost-period $p$ if $\{y^k\,|\,k\in\N\cup\{0\}\}=\{y^t,y^{t+1},\ldots,y^{t+p-1}\}$ for all $t\in\N\cup\{0\}$, namely each block of $p$ consecutive elements of the sequence contains all the elements of the sequence.

$\Id:X\to X$ denotes the identity operator, namely $\Id(x)=x$ for all $x\in X$. Given an operator $T:X\to X$, we say that $T$ is affine whenever there are a point $p_T\in X$ and a linear operator $\wt{T}:X\to X$ such that $T=p_T+\wt{T}$. We say that $T$ is linear convex if $T(\alpha x+\beta y)=\alpha T(x)+\beta T(y)$ for all $x,y\in X$ and all $\alpha,\beta\in [0,1]$.  We say that $T$ is nonexpansive if it is 1-Lipschitz continuous, namely if $\|T(x)-T(y)\|\leq \|x-y\|$ for all $x$ and $y$ in $X$. We say that $T$ is firmly nonexpansive if $\|T(x)-T(y)\|^2\leq \|x-y\|^2-\|(x-T(x))-(y-T(y))\|^2$ for all $x,y\in X$. We say that $T$ is a relaxation of another operator $S:X\to X$ with a relaxation parameter $\lambda\in\R$, or that $T$ is the $\lambda$-relaxation of $S$, whenever $T=(1-\lambda)\Id+\lambda S$. It is well known that an orthogonal projection onto a nonempty, closed and convex subset is firmly nonexpansive \cite[Theorem 2.2.21, p. 76]{Cegielski2012book} and also that the $\lambda$-relaxation of a firmly nonexpansive operator is nonexpansive whenever $\lambda\in [0,2]$ (see \cite[Theorem 2.2.10, p. 70]{Cegielski2012book}). 

We say that $T$ is strongly nonexpansive if $T$ is nonexpansive and has the property that for all sequences $(x^k)_{k=0}^{\infty}$ and $(y^k)_{k=0}^{\infty}$ in $X$ which satisfy both $\sup\{\|x^k-y^k\|\,|\, k\in\N\cup\{0\}\}<\infty$ and $\lim_{k\to\infty}(\|x^k-y^k\|-\|T(x^k)-T(y^k)\|)=0$, one has $\lim_{k\to\infty}((x^k-y^k)-(T(x^k)-T(y^k)))=0$. For more details on strongly nonexpansive mappings see \cite{BruckReich1977jour} (Banach spaces) and \cite[Chapters 2--3]{Cegielski2012book} (Hilbert spaces).

Given a nonempty subset $A$ of $X$, its diameter is defined to be $\textnormal{diam}(A):=\sup\{\|a_1-a_2\|\,|\,a_1,a_2\in A\}\in [0,\infty]$. We say that $A$ is a half-space (respectively, a hyperplane) if there are a linear functional $f:X\to\R$ and $\alpha\in\R$ such that $A=\{x\in X\,| f(x)\leq \alpha\}$ (respectively, $A=\{x\in X\,| f(x)=\alpha\}$). We say that $A$ is an affine subspace if there is a point $p_A\in X$ and a linear subspace $\wt{A}$ of $X$ such that $A=p_A+\wt{A}$, and in this case $p_A$ is called the associated point and $\wt{A}$ is called the associated linear subspace. 

If $A$ is a closed affine subspace with an associated closed linear subspace $\wt{A}$ and an associated point $p_A$, then it can be assumed without loss of generality that $p_{A}$ is in the orthogonal complement $\wt{A}^{\perp}$ of $\wt{A}$, otherwise we use both the fact that $\wt{A}$ is closed under addition and the fact that $p_{A}=\wt{p}_{A}+p'_{A}$, where $\wt{p}_{A}\in \wt{A}$ and $p'_{A}\in\wt{A}^{\perp}$ to conclude from these facts that $A=p_{A}+\wt{A}=\wt{p}_{A}+p'_{A}+\wt{A}=p'_{A}+\wt{A}$, namely we have a representation of $A$ as a sum of $\wt{A}$ and a point from $\wt{A}^{\perp}$. Using this representation, the well-known identity $P_{p_A+\wt{A}}(\cdot)=p_A+P_{\wt{A}}(\cdot-p_{A})$ (see \cite[Proposition 3.19, p. 54]{BauschkeCombettes2017book}), the assumption $p_A\in \wt{A}^{\perp}$ and the linearity of $P_{\wt{A}}$, imply that $P_A=P_{\wt{A}}+(Id-P_{\wt{A}})(p_{A})=P_{\wt{A}}+p_A$. 

We denote by $d(x,A):=\inf\{\|x-a\|\,|\,a\in A\}$ the distance between the point $x\in X$ and the subset $A$, and by $dist(A_1,A_2):=\inf\{\|a_1-a_2\|\,|\,a_1\in A_1,\,a_2\in A_2\}$ the distance between the nonempty subsets $A_1\subseteq X$ and $A_2\subseteq X$. Given a natural number $m\in\N$ and a family  $\mathcal{L}:=(L_i)_{i=1}^m$ of $m$ closed linear subspaces $L_i$, $i\in I:=\{1,2,\ldots,m\}$ of $X$, we say that $\mathcal{L}$ is regular (or linearly regular) if there is $\eta>0$ such that $d(x,\cap_{i=1}^m L_i)\leq \eta \max\{d(x,L_i)\,|\, i\in I\}$ for all $x\in X$. We say that the family $\mathcal{L}$ is innately regular if every nonempty subfamily of $\mathcal{L}$ is regular. 

We say that a family $\mathcal{A}:=(A_i)_{i=1}^m$ of $m$ closed affine subspaces of $X$ is innately regular if the family $\wt{\mathcal{A}}:=(\wt{A}_i)_{i=1}^m$ of the associated closed linear subspaces $\wt{A}_i$, $i\in I$ is innately regular. See \cite{BauschkeTung2026prep,GunturkThao2023jour}
for more details related to innately regular families of closed linear subspaces, and \cite{Bauschke1995jour,BauschkeBorwein1996jour} for more details related to regular families of closed and convex sets (\cite{BauschkeBorwein1996jour} also presents various properties related to regular families of closed affine subspaces). In particular, as follows for example from the discussion presented in \cite[Section 2]{GunturkThao2023jour}, if each closed linear subspace $L_i$, $i\in I$ of the family $\mathcal{L}$ is either finite dimensional or finite co-dimensional, then $\mathcal{L}$ is innately regular. 
 
Finally, we note that, as is well known, many of the concepts mentioned above, such as the concept of a nonexpansive operator, carry over virtually word for word to any normed space and beyond, and in Theorem \bref{thm:UniformInfty} below we consider such a setting.

\section{Perturbation resilience}\label{sec:Perturbations}
This section presents two results related to Theorem \bref{thm:BauschkeTung} which take into account the possible existence of perturbations of the iterates of the iterative process. Allowing such perturbations may facilitate the study of the behavior of the iterative sequence when computational errors, noise, and so on arise. Moreover, the possibility to accommodate perturbations is valuable also for, and may be used in, the superiorization method \cite{CensorSuperiorizationPage,Censor2015surv,CensorDavidiHerman2010jour,CensorReem2015jour,
DavidiHermanCensor2009jour,Herman2014surv,ReemDe-Pierro2017jour}. In that method the property of perturbations resilience of a feasibility-seeking iterative process enables to interlace into the process objective function reduction steps without loosing the overall convergence to a feasible point of the imposed constraints.

\begin{thm}\label{thm:BoundedPerturbations}
In the setting of Theorem \bref{thm:BauschkeTung}, suppose that  $y^k:=x^k+e^k$ for all $k\in\N\cup\{0\}$, where  $(e^k)_{k=0}^{\infty}$ is any sequence of vectors in the given real Hilbert space $X$ whose magnitudes are bounded by some $\rho\in(0,\infty)$, i.e., $\|e^k\|\leq \rho$ for all $k\in\N\cup\{0\}$. Then $\|y^k\|\leq\|x^0\|+\lambda\mu_{\mathcal{A},\lambda}+\rho$ for each $k\in\N\cup\{0\}$. 
\end{thm}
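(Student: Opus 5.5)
The plan is to read the statement literally and combine Theorem \bref{thm:BauschkeTung} with the triangle inequality. In the setting of Theorem \bref{thm:BauschkeTung}, the sequence $(x^k)_{k=0}^{\infty}$ is generated by \beqref{eq:x^(k+1)} from $x^0$. The perturbed sequence is defined pointwise by $y^k:=x^k+e^k$, so the perturbations are added to the iterates but do not feed back into the recursion. For each $k\in\N\cup\{0\}$ I will write
\begin{equation*}
\|y^k\|=\|x^k+e^k\|\leq \|x^k\|+\|e^k\|.
\end{equation*}
Then I will invoke Theorem \bref{thm:BauschkeTung}, which gives $\|x^k\|\leq \|x^0\|+\lambda\mu_{\mathcal{A},\lambda}$ for every $k$, for every control $c$ and every choice of $\lambda_k\in[0,\lambda]$. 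Together with the hypothesis $\|e^k\|\leq\rho$, this yields $\|y^k\|\leq \|x^0\|+\lambda\mu_{\mathcal{A},\lambda}+\rho$ uniformly in $k$.

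The only point needing care is $k=0$. Theorem \bref{thm:BauschkeTung} states its bound for the iterates generated from $x^0$. At $k=0$ the bound $\|x^0\|\leq\|x^0\|+\lambda\mu_{\mathcal{A},\lambda}$ is trivial because $\mu_{\mathcal{A},\lambda}>0$, so the estimate holds for all $k\in\N\cup\{0\}$. The constant $\mu_{\mathcal{A},\lambda}$ depends only on $\mathcal{A}$ and $\lambda$, not on the sequence $(e^k)_{k=0}^{\infty}$, so the resulting bound is uniform over all admissible perturbations of size at most $\rho$.

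I expect no real obstacle. The one thing to check is that the statement perturbs the output sequence rather than the recursion, and with $y^k:=x^k+e^k$ as written it does. A stronger variant would feed the perturbation back into the recursion, $y^{k+1}=T_k(y^k)+e^k$. Bounded errors would then accumulate, and I would not attempt that here; it presumably requires summable perturbations, which Theorem \bref{thm:SummablePerturbations} seems intended to address.
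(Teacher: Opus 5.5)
Your proof is correct and matches the paper's: the paper likewise bounds $\|x^k\|$ by $\|x^0\|+\lambda\mu_{\mathcal{A},\lambda}$ via Theorem \ref{thm:BauschkeTung} and then applies the triangle inequality. It merely phrases that last step in a general metric space, with center $0$, radius $\|x^0\|+\lambda\mu_{\mathcal{A},\lambda}$ and $\sigma:=\rho$. You are also right that the perturbations here do not feed back into the recursion; the feedback version is what Theorem \ref{thm:SummablePerturbations} treats, and its bound involves $\sum_{k}\|e^k\|$, which is finite for summable perturbations.
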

\begin{proof}
Indeed, by Theorem \bref{thm:BauschkeTung} we know that $(x^k)_{k=0}^{\infty}$ is bounded by $\|x^0\|+\lambda\mu_{\mathcal{A},\lambda}$. Now, given an arbitrary metric space $(W,d)$, if $(u^k)_{k=0}^{\infty}$ is an arbitrary bounded sequence  there (located, say, in the closed ball of center $w\in W$ and radius $r>0$)  and if $(v^k)_{k=0}^{\infty}$ is any sequence which satisfies $\sigma:=\sup\{d(u^k,v^k)\,|\, k\in\N\cup\{0\}\}<\infty$, then also $(v^k)_{k=0}^{\infty}$ is bounded by $\sigma+r$ (since $d(v^k,w)\leq d(v^k,u^k)+d(u^k,w)\leq \sigma+r$ for all $k\in\N\cup\{0\}$). Thus, if for each $k\in\N\cup\{0\}$ we denote $u^k:=x^k$, $v^k:=y^k$, then $d(u^k,v^k)=\|e^k\|\leq \rho=:\sigma$. Since in our case $w:=0$ and  $r:=\|x^0\|+\lambda\mu_{\mathcal{A},\lambda}$, we conclude from the previous lines that $\|y^k\|\leq \sigma+r=\rho+\|x^0\|+\lambda\mu_{\mathcal{A},\lambda}$ for every $k\in\N\cup\{0\}$.   
\end{proof}

\begin{thm}\label{thm:SummablePerturbations}
In the setting of Theorem \bref{thm:BauschkeTung}, suppose that instead of \beqref{eq:x^(k+1)} we let
\begin{equation}\label{eq:x^(k+1)=x^(k)+e^k}
x^{k+1}:=T_k(x^k)+e^k, \quad \forall\, k\in\N\cup\{0\}, 
\end{equation}
where  $(e^k)_{k=0}^{\infty}$ is any sequence of vectors in $X$. Then $\|x^k\|\leq \|x^0\|+\lambda\mu_{\mathcal{A},\lambda}+\sum_{k=0}^{\infty}\|e^k\|$ for every $k\in\N\cup\{0\}$ (this upper bound is obviously of interest only when the perturbations are summable, namely when $\sum_{k=0}^{\infty}\|e^k\|<\infty$). 
\end{thm}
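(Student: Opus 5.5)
The idea is to compare the perturbed sequence with unperturbed sequences generated by the same operators. Each $T_k$ is nonexpansive, because it is a $\lambda_k$-relaxation with $\lambda_k\in[0,\lambda]\subseteq[0,2]$ of the firmly nonexpansive projection $P_{A_{c(k)}}$. Fix $k\in\N$. For $0\le j\le k$, let $z^{(j)}_k$ denote the point obtained by starting from the perturbed iterate $x^j$ and applying the unperturbed operators:
\begin{equation*}
z^{(j)}_k:=T_{k-1}T_{k-2}\cdots T_j(x^j),
\end{equation*}
with the convention $z^{(k)}_k:=x^k$. Then $z^{(0)}_k$ is exactly the $k$-th iterate of the unperturbed process \beqref{eq:x^(k+1)} started at the same $x^0$. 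Hence Theorem \bref{thm:BauschkeTung} (in its second formulation, with $S_i:=T_i\in\mathcal{R}_{\mathcal{A},\lambda}$) gives $\|z^{(0)}_k\|\le \|x^0\|+\lambda\mu_{\mathcal{A},\lambda}$.

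Next, I telescope: $x^k=z^{(0)}_k+\sum_{j=0}^{k-1}\bigl(z^{(j+1)}_k-z^{(j)}_k\bigr)$. For each $j$, we have $x^{j+1}=T_j(x^j)+e^j$, and therefore
\begin{equation*}
z^{(j+1)}_k=T_{k-1}\cdots T_{j+1}(T_j(x^j)+e^j),\qquad z^{(j)}_k=T_{k-1}\cdots T_{j+1}(T_j(x^j)).
\end{equation*}
Here the empty composition (when $j=k-1$) is understood as $\Id$. A composition of nonexpansive maps is nonexpansive, so $\|z^{(j+1)}_k-z^{(j)}_k\|\le\|e^j\|$. The triangle inequality then yields
\begin{equation*}
\|x^k\|\le \|x^0\|+\lambda\mu_{\mathcal{A},\lambda}+\sum_{j=0}^{k-1}\|e^j\|\le \|x^0\|+\lambda\mu_{\mathcal{A},\lambda}+\sum_{j=0}^{\infty}\|e^j\|.
\end{equation*}
The case $k=0$ is trivial.

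There is no real obstacle. The one point that needs care is that the universal bound of Theorem \bref{thm:BauschkeTung} is used only once, for the unperturbed trajectory from $x^0$, and the error terms are propagated by nonexpansiveness alone. If instead one restarted Theorem \bref{thm:BauschkeTung} from each $x^j$, the constant $\lambda\mu_{\mathcal{A},\lambda}$ would accumulate $k$ times. So the key step is to isolate the single application of Theorem \bref{thm:BauschkeTung} and to verify the nonexpansiveness of the relaxed projections $T_k$.
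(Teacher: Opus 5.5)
Your proof is correct, but it takes a different route from the paper's. The paper uses the affine structure of the operators. It writes $P_{A_i}=P_{\wt{A}_i}+p_{A_i}$ with $p_{A_i}\in\wt{A}_i^{\perp}$ and decomposes $T_k=\wt{T}_k+\lambda_k p_{A_{c(k)}}$ with $\wt{T}_k$ linear. It then unrolls the recursion into the exact identity
$x^{k+1}=\wt{T}_k\cdots\wt{T}_0(x^0)+\sum_{j=0}^k\lambda_{k-j}\wt{T}_k\cdots\wt{T}_{k+1-j}(p_{A_{c(k-j)}})+\sum_{j=0}^k\wt{T}_k\cdots\wt{T}_{k+1-j}(e^{k-j})$.
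The first two groups together are the unperturbed iterate, which Theorem \ref{thm:BauschkeTung} bounds. The last group is bounded using linearity and nonexpansiveness of the $\wt{T}_j$.

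Your telescoping comparison uses only the nonexpansiveness of the $T_k$ and never their affineness. In the affine case your differences $z^{(j+1)}_k-z^{(j)}_k$ are exactly the paper's error terms $\wt{T}_{k-1}\cdots\wt{T}_{j+1}(e^j)$, so both arguments give the same partial-sum bound.

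The paper's route gives an explicit splitting of the iterate into the unperturbed trajectory plus linearly propagated errors. Yours is shorter and more general: it works whenever the unperturbed products from a given starting point obey a uniform bound and the operators are nonexpansive. For example, it yields the same summable-perturbation result in the half-space setting of Theorem \ref{thm:Halfspaces}, where relaxed projections are not affine and the paper's linear decomposition is unavailable. It also covers the string-averaging scheme of Theorem \ref{thm:BoundedString}, whose step operators are nonexpansive as convex combinations of compositions of nonexpansive maps.
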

\begin{proof}
Since $A_i$ is a closed affine subspace for each $i\in I=\{1,2,\ldots,m\}$, for every $i\in I$ there exist a closed linear subspace $\wt{A}_i$ and a point $p_{A_i}\in \wt{A}_i^{\perp}$ such that $A_i=p_{A_i}+\wt{A}_i$, and we also have $P_{A_i}=P_{\wt{A_i}}+p_{A_i}$ (see Section \bref{sec:Preliminaries}). Therefore for all $k\in\N\cup\{0\}$ and all $x\in X$, we have 
\begin{multline}\label{eq:T_k_lambda_k}
T_k(x)=(1-\lambda_k)x+\lambda_k P_{A_{c(k)}}(x)=(1-\lambda_k)x+\lambda_k (P_{\wt{A}_{c(k)}}(x)+p_{A_{c(k)}})\\
=(1-\lambda_k)x+\lambda_k P_{\wt{A}_{c(k)}}(x)+\lambda_k p_{A_{c(k)}}=\wt{T}_k(x)+\lambda_k p_{A_{c(k)}},
\end{multline} 
where $\wt{T}_k(x):=(1-\lambda_k)x+\lambda_k P_{\wt{A}_{c(k)}}(x)$ for all $x\in X$ and all $k\in\N\cup\{0\}$. 
From \beqref{eq:x^(k+1)=x^(k)+e^k}, \beqref{eq:T_k_lambda_k}, the linearity of the operators $(\wt{T_k})_{k=0}^{\infty}$ and mathematical induction, we see that for each $k\in\N\cup\{0\}$, 
\begin{equation}\label{eq:x^(k+1)PertrubRecursion}
\begin{split}
x^{k+1}&=T_k(x^k)+e^k\\
&=\wt{T}_k(x^k)+\lambda_k p_{A_{c(k)}}+e^k\\
&=\wt{T}_k(T_{k-1}(x^{k-1})+e^{k-1})+\lambda_k p_{A_{c(k)}}+e^k\\
&=\wt{T}_k T_{k-1}(x^{k-1})+\wt{T}_k(e^{k-1})+\lambda_k p_{A_{c(k)}}+e^k\\
&=\wt{T}_k(\wt{T}_{k-1}(x^{k-1})+\lambda_{k-1}p_{A_{c(k-1)}})+\wt{T}_k(e^{k-1})+\lambda_k p_{A_{c(k)}}+e^k\\
&=\wt{T}_k\wt{T}_{k-1}(x^{k-1})+\lambda_{k-1}\wt{T}_k(p_{A_{c(k-1)}})+\wt{T}_k(e^{k-1})+\lambda_k p_{A_{c(k)}}+e^k\\
&=
\wt{T}_k\wt{T}_{k-1}(T_{k-2}(x^{k-2})+e^{k-2})+\lambda_{k-1}\wt{T}_k(p_{A_{c(k-1)}})+\wt{T}_k(e^{k-1})+\lambda_k p_{A_{c(k)}}+e^k\\
&=\wt{T}_k\wt{T}_{k-1}(T_{k-2}(x^{k-2}))+\wt{T}_k\wt{T}_{k-1}(e^{k-2})+\lambda_{k-1}\wt{T}_k(p_{A_{c(k-1)}})+\wt{T}_k(e^{k-1})+\lambda_k p_{A_{c(k)}}+e^k\\
&=\wt{T}_k\wt{T}_{k-1}(\wt{T}_{k-2}(x^{k-2})+\lambda_{k-2}p_{A_{c(k-2)}})+\wt{T}_k\wt{T}_{k-1}(e^{k-2})+\lambda_{k-1}\wt{T}_k(p_{A_{c(k-1)}})+\wt{T}_k(e^{k-1})\\
&+\lambda_k p_{A_{c(k)}}+e^k\\
&=\wt{T}_k\wt{T}_{k-1}\wt{T}_{k-2}(x^{k-2})+\lambda_{k-2}\wt{T}_k\wt{T}_{k-1}(p_{A_{c(k-2)}})+\wt{T}_k\wt{T}_{k-1}(e^{k-2})+\lambda_{k-1}\wt{T}_k(p_{A_{c(k-1)}})\\
&+\wt{T}_k(e^{k-1})+\lambda_k p_{A_{c(k)}}+e^k\\
&=\ldots\\
&=\wt{T}_{k}\wt{T}_{k-1}\cdots \wt{T}_0(x^0)+\sum_{j=0}^k \wt{T}_k\cdots \wt{T}_{k+1-j}(e^{k-j})+\sum_{j=0}^k \lambda_{k-j}\wt{T}_k\cdots \wt{T}_{k+1-j}(p_{A_{c(k-j)}}),
\end{split}
\end{equation}
where in the last line of \beqref{eq:x^(k+1)PertrubRecursion} we use the convention $\wt{T}_k\cdots \wt{T}_{k+1-j}(e^{k-j}):=e^k$ and $\wt{T}_k\cdots \wt{T}_{k+1-j}(p_{A_{c(k-j)}}):=p_{A_{c(k)}}$ for $j:=0$. 

We observe that in the particular case where $e^k=0$ for all $k\in\N\cup\{0\}$, then \beqref{eq:x^(k+1)PertrubRecursion} and the linearity of the operators $\wt{T}_k$, $k\in \N\cup\{0\}$ imply the equality $x^{k+1}=\wt{T}_{k}\wt{T}_{k-1}\cdots \wt{T}_0(x^0)+\sum_{j=0}^k \lambda_{k-j}\wt{T}_k\cdots \wt{T}_{k+1-j}(p_{A_{c(k-j)}})$. On the other hand, in this case we are fully in the setting of Theorem \bref{thm:BauschkeTung}, and so this theorem implies that 
\begin{equation}\label{eq:sum_lambda_(k-j)T_kT_(k+1-j)p_A_c(k-j)}
\|\wt{T}_{k}\wt{T}_{k-1}\cdots \wt{T}_0(x^0)+\sum_{j=0}^k \lambda_{k-j}\wt{T}_k\cdots \wt{T}_{k+1-j}(p_{A_{c(k-j)}})\|\leq \|x^0\|+\lambda\mu_{\mathcal{A},\lambda}.
\end{equation}
In addition, the triangle inequality, together with the fact that $T_j$ is linear and nonexpansive for each $j\in \{0,1,\ldots,k\}$, imply that 
\begin{multline}\label{eq:sum e^j}
\|\sum_{j=0}^k \wt{T}_k\cdots \wt{T}_{k+1-j}(e^{k-j})\|\leq \|e^k\|+\sum_{j=1}^k\|\wt{T}_k\cdots \wt{T}_{k+1-j}(e^{k-j})\|\\
\leq \|e^k\|+\sum_{j=1}^k\|e^{k-j}\|\leq \sum_{j=0}^{\infty}\|e^j\|.
\end{multline}
The combination of \beqref{eq:x^(k+1)PertrubRecursion}, \beqref{eq:sum_lambda_(k-j)T_kT_(k+1-j)p_A_c(k-j)}, \beqref{eq:sum e^j} and the triangle inequality imply the assertion (even if $\sum_{j=0}^{\infty}\|e^j\|=\infty$, but in this case the estimate is trivial).
\end{proof}

\section{Closed half-spaces}\label{sec:Halfspaces}
In this section we establish Theorem \bref{thm:Halfspaces}, which shows that Theorem \bref{thm:BauschkeTung}
can be extended to closed half-spaces. This result partly extends \cite[Theorem 3.2]{BauschkeTung2025prep} in the sense that it allows a mixture of closed half-spaces and hyperplanes in $\mathcal{A}$. Note that a relaxed projection of a point $x\in X$ onto a closed half-space $A$ does not necessarily coincide with the relaxed projection onto the boundary hyperplane $\wh{A}$ of $A$ (they coincide if $x\notin A$ or if $x\in\wh{A}$); hence Theorem \bref{thm:Halfspaces} below does not follow immediately from Theorem \bref{thm:BauschkeTung}.  

\begin{thm}\label{thm:Halfspaces}
Consider the setting of Theorem \bref{thm:BauschkeTung}, wherein the family $\mathcal{A}:=(A_i)_{i=1}^m$ has the property that each $A_i$, $i\in\{1,2,\ldots,m\}$ is either a closed half-space or a closed hyperplane. Generate the sequence $(x^k)_{k=0}^{\infty}$ with the relaxation sequence of operators $(T_k)_{k=0}^{\infty}$ as in \beqref{eq:x^(k+1)}. Let  $\wh{A}_i$  be the boundary hyperplane of $A_i$ if $A_i$ is a half-space, and $\wh{A}_i:=A_i$ if $A_i$ is a hyperplane, and let $\wh{\mathcal{A}}:=(\wh{A}_i)_{i=1}^m$ (this is an innately regular family since each $\wh{A}_i$ is finite co-dimensional). Then $\|x^k\|\leq \|x^0\|+\lambda \mu_{\wh{\mathcal{A}},\lambda}$ for all $k\in\N\cup\{0\}$, namely  $(x^k)_{k=0}^{\infty}$ is bounded by the bound mentioned in Theorem \bref{thm:BauschkeTung}, where instead of $\mathcal{A}$ there one takes $\wh{\mathcal{A}}$.    
\end{thm}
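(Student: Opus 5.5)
The plan is to reduce the half-space iteration, step by step, to a relaxed-projection iteration onto the hyperplanes $\wh{A}_i$, with possibly modified relaxation parameters, and then invoke Theorem \bref{thm:BauschkeTung} for the family $\wh{\mathcal{A}}$. The key observation concerns a single step $x^{k+1}=(1-\lambda_k)x^k+\lambda_k P_{A_{c(k)}}(x^k)$ with $A:=A_{c(k)}$ a half-space. If $x^k\notin A$, then $P_A(x^k)=P_{\wh{A}}(x^k)$, so the step is exactly the $\lambda_k$-relaxed projection onto $\wh{A}$ with $\lambda_k\in[0,\lambda]$. If $x^k\in A$, then $P_A(x^k)=x^k$, so $x^{k+1}=x^k$. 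This equals the $0$-relaxation of $P_{\wh{A}}$, namely $(1-0)\Id+0\cdot P_{\wh{A}}$. Since $0\in[0,\lambda]$, this is again an operator in $\mathcal{R}_{\wh{\mathcal{A}},\lambda}$. When $A$ is a hyperplane there is nothing to do, since $\wh{A}=A$.

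Accordingly, I would define new relaxation parameters $\lambda'_k:=\lambda_k$ if $x^k\notin A_{c(k)}$, and $\lambda'_k:=0$ if $x^k\in A_{c(k)}$ and $A_{c(k)}$ is a half-space. I would keep the same control $c$ and set $T'_k:=(1-\lambda'_k)\Id+\lambda'_kP_{\wh{A}_{c(k)}}$. By induction on $k$, with the same starting point $x^0$, the sequence generated by $T'_k$ coincides with $(x^k)$, because at each step $T'_k(x^k)=T_k(x^k)$ by the case analysis above. The parameters $\lambda'_k$ depend on the trajectory, but this is harmless. Theorem \bref{thm:BauschkeTung} is uniform: its bound holds for every choice of $\lambda'_k\in[0,\lambda]$ and every control, equivalently for every finite product of operators from $\mathcal{R}_{\wh{\mathcal{A}},\lambda}$. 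So once the parameter sequence is fixed, a posteriori, we are in its setting.

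It remains to check the hypothesis that $\wh{\mathcal{A}}$ is innately regular. Each $\wh{A}_i$ is a closed hyperplane, since the functional defining a closed half-space is continuous and nonzero. Its associated linear subspace is a kernel of codimension one. By the remark in Section \bref{sec:Preliminaries} (finite co-dimensional subspaces form an innately regular family), the family $\wh{\mathcal{A}}$ is innately regular. Theorem \bref{thm:BauschkeTung} then yields $\|x^k\|\le\|x^0\|+\lambda\mu_{\wh{\mathcal{A}},\lambda}$.

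The only potentially delicate point is the identity $P_A(x)=P_{\wh{A}}(x)$ for $x\notin A$. I would verify it from the explicit formula $P_A(x)=x-\frac{\max\{f(x)-\alpha,0\}}{\|u\|^2}u$, where $f=\langle u,\cdot\rangle$ by the Riesz representation theorem and $u\neq 0$. For $x\notin A$ we have $f(x)>\alpha$, so this formula is precisely the projection onto $\{f=\alpha\}$. I expect no real obstacle beyond this bookkeeping; the substance lies entirely in Theorem \bref{thm:BauschkeTung}.
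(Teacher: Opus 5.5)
Your proof is correct, and it takes a different, more economical route than the paper. Both arguments reduce to Theorem \bref{thm:BauschkeTung} for $\wh{\mathcal{A}}$. The difference is in how the idle steps are handled: those where the iterate does not move, which includes every step with $x^k\in A_{c(k)}$.

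The paper deletes them. It first splits into the cases where $(x^k)_{k=0}^{\infty}$ is eventually constant or not. It then extracts the subsequence $(x^{k_j})$ of iterates at which the sequence actually changes, and shows $x^{k_j}\notin A_{c(k_{j+1}-1)}$. Hence each surviving step is a genuine $\lambda_{k_{j+1}-1}$-relaxed projection onto $\wh{A}_{c(k_{j+1}-1)}$, and the paper applies Theorem \bref{thm:BauschkeTung} with the reindexed control $\wh{c}(j):=c(k_{j+1}-1)$.

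You instead keep every step and the original control, and absorb each idle half-space step as the $0$-relaxation of $P_{\wh{A}_{c(k)}}$. This is legitimate because Theorem \bref{thm:BauschkeTung} allows $\lambda_k\in[0,\lambda]$, so $\Id\in\mathcal{R}_{\wh{\mathcal{A}},\lambda}$. Its bound is also uniform in the parameters, so your trajectory-dependent choice of $\lambda'_k$ is harmless, just as the trajectory-dependent control $\wh{c}$ is harmless in the paper.

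Your route eliminates the case split and the subsequence bookkeeping. In particular, it avoids the paper's tail definition $\wh{T}_j:=T_j$ for $j\ge q$ in the eventually-constant case. That operator need not belong to $\mathcal{R}_{\wh{\mathcal{A}},\lambda}$ when $A_{c(j)}$ is a half-space, though this is harmless there since only $\wh{x}^0,\ldots,\wh{x}^q$ are used. The paper's compression has the mild feature of never introducing a relaxation parameter absent from the original process. However, a zero-parameter step is just $\Id$, so the two reductions are equivalent in strength.

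The only slip in your write-up is that your definition of $\lambda'_k$ omits the case where $A_{c(k)}$ is a hyperplane containing $x^k$. Any value works there (e.g.\ $\lambda'_k:=\lambda_k$), since then $T'_k(x^k)=x^k=T_k(x^k)$.
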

\begin{proof}
There are two possibilities: in the first one the sequence $(x^k)_{k=0}^{\infty}$ becomes constant starting from some place onward, and in the second one $(x^k)_{k=0}^{\infty}$ never becomes constant starting from any place. In what follows we consider each of these possibilities separately, even though the proofs in both cases share many similarities. \\\vspace*{0.2cm}

{\noindent\bf The first possibility:} In this case there is a minimal $\wt{k}\in\N\cup\{0\}$ such that $x^k=x^{\wt{k}}$ for all $\wt{k}\leq k\in\N\cup\{0\}$. Therefore, $(x^k)_{k=0}^{\infty}$ is bounded by $\max_{k\in\{0,\ldots,\wt{k}\}}\|x^k\|$. The rest of the proof in the first possibility is aimed at showing that the bound $\max_{k\in\{0,\ldots,\wt{k}\}}\|x^k\|$ is by itself bounded by the bound mentioned in Theorem \bref{thm:BauschkeTung}, where instead of $\mathcal{A}$ there one takes $\wh{\mathcal{A}}$. 

The minimality of $\wt{k}$ implies that the finite sequence $(x^k)_{k=0}^{\wt{k}}$ does not become constant starting from some place $u\in \{0,1,\ldots,\wt{k}-1\}$  (where $\{0,1,\ldots,\wt{k}-1\}:=\emptyset$ if $\wt{k}=0$), and hence we can construct by induction a finite subsequence $(x^{k_j})_{j=0}^{q}$, such that $q\in \N\cup\{0\}$  is not bigger than $\wt{k}$, $k_0=0$, $k_q=\wt{k}$,  $k_j<k_{j+1}$ and $x^{k_{j+1}}\neq x^{k_j}$ for all $j\in\{0,1,\ldots,q-1\}$ (here and elsewhere $\{0,1,\ldots,q-1\}:=\emptyset$ if $q=0$), and for all $j\in\{0,\ldots,q-1\}$ and all $s\in\{k_j,k_j+1,\ldots,k_{j+1}-1\}$ we have  $x^s=x^{k_j}$. 

Since $x^s=x^{k_j}$ for all $j\in\{0,1,\ldots,q-1\}$ and all $s\in\{k_j,k_j+1,\ldots,k_{j+1}-1\}$, and since $x^{k+1}=T_{k}(x^k)$ for each $k\in\N\cup\{0\}$, we have 
\begin{equation}
x^{k_{j+1}}=T_{k_{j+1}-1}(x^{k_{j+1}-1})=\ldots=T_{k_{j+1}-1}(x^{k_{j}}).
\end{equation}
If $x^{k_j}\in A_{c(k_{j+1}-1)}$ for some $j\in\{0,1,\ldots,q-1\}$, then $x^{k_j}=P_{A_{c(k_{j+1}-1)}}(x^{k_j})$, and since $x^{k_{j+1}}=T_{k_{j+1}-1}(x^{k_{j}})$ as was shown above, we have 
\begin{equation*}
x^{k_{j+1}}=T_{k_{j+1}-1}(x^{k_{j}})=(1-\lambda_{k_{j+1}-1})x^{k_j}+\lambda_{k_{j+1}-1} P_{A_{c(k_{j+1}-1)}}(x^{k_j})=x^{k_j},
\end{equation*}
a contradiction to the construction of $(x^{k_t})_{t=0}^{q}$. Therefore, $x^{k_j}\notin A_{c(k_{j+1}-1)}$ for all $j\in\{0,\ldots,q-1\}$. Now, either $A_{c(k_{j+1}-1)}$ is a half-space, and then the relation $x^{k_j}\notin A_{c(k_{j+1}-1)}$ implies that $T_{k_{j+1}-1}(x^{k_{j}})$  is the relaxed projection of $x^{k_{j}}$ onto the boundary hyperplane $\wh{A}_{c(k_{j+1}-1)}$ of $A_{c(k_{j+1}-1)}$, or $A_{c(k_{j+1}-1)}$ is a hyperplane, and then $T_{k_{j+1}-1}(x^{k_{j}})$ is the relaxed projection of $x^{k_{j}}$ onto this hyperplane, which is equal to $\wh{A}_{c(k_{j+1}-1)}$ by the definition of $\wh{A}_{c(k_{j+1}-1)}$.

Define a control $\wh{c}:\N\cup\{0\}\to I$ by $\wh{c}(j):=c(k_{j+1}-1)$ for all $j\in\{0,\ldots,q-1\}$, and $\wh{c}(j)=c(j)$ for every $j\geq q$. In addition, let $\wh{T}_j:=T_{k_{j+1}-1}$ and $\wh{x}^j:=x^{k_j}$ for all $j\in\{0,\ldots,q-1\}$ (and $\wh{T}_0:=T_0$ and $\wh{x}^0:=x^0$ if $q=0$), and $\wh{T}_j:=T_j$ and $\wh{x}^j:=\wh{T}_{j-1}(\wh{x}^{j-1})$ for all $q\leq j\in\N$. 

From previous lines above we conclude that for all $j\in\{0,1,\ldots,q-1\}$
\begin{equation}
\wh{x}^{j+1}=x^{k_{j+1}}=T_{k_{j+1}-1}(x^{k_{j}})=\wh{T}_{j}(\wh{x}^j).
\end{equation} 
Since $\wh{x}^{j+1}=\wh{T}_j(\wh{x}^j)$ also for all $q\leq j\in \N\cup\{0\}$ by the definitions of $\wh{x}^j$ and $\wh{T}_j$, we conclude from Theorem \bref{thm:BauschkeTung} (in which we take $\wh{\mathcal{A}}$ instead of $\mathcal{A}$, $\wh{c}$ instead of $c$, $(\wh{T}_k)_{k=0}^{\infty}$ instead of $(T_k)_{k=0}^{\infty}$, and $(\wh{x}^k)_{k=0}^{\infty}$ instead of $(x^k)_{k=0}^{\infty}$) that $(\wh{x}^k)_{k=0}^{\infty}$ is bounded by $\|\wh{x}^0\|+\lambda \mu_{\wh{\mathcal{A}},\lambda}$. Since the set $\{\|x^k\|\,|\, k\in\{0,1,\ldots,\wt{k}\}\}$ is equal to the set $\{\|\wh{x}^j\|\,|\, j\in\{0,1,\ldots,q\}\}$ by the construction of $(\wh{x}^j)_{j=0}^{q}$, and since 
\begin{equation}
\sup\{\|\wh{x}^j\|\,|\, j\in\{0,1,\ldots,q\}\}\leq \sup\{\|\wh{x}^j\|\,|\, j\in\N\cup\{0\}\leq \|\wh{x}^0\|+\lambda \mu_{\wh{\mathcal{A}},\lambda}, 
\end{equation}
it follows that the finite subsequence $(x^k)_{k=0}^{\wt{k}}$ is  bounded by the same bound $\|\wh{x}^0\|+\lambda \mu_{\wh{\mathcal{A}},\lambda}$, which is equal to $\|x^0\|+\lambda \mu_{\wh{\mathcal{A}},\lambda}$ since $\wh{x}^0=x^0$. \\\vspace*{0.2cm}

{\noindent\bf The second possibility:} Since in this case $(x^k)_{k=0}^{\infty}$ never becomes constant from some place onward, we can construct, by induction, an infinite subsequence $(x^{k_j})_{j=0}^{\infty}$ having the properties that $k_0=0$, $k_j<k_{j+1}$ and $x^{k_{j+1}}\neq x^{k_j}$ for all $j\in\N\cup\{0\}$, and also that for all $j\in\N\cup \{0\}$ and all $s\in\{k_j,k_j+1,\ldots,k_{j+1}-1\}$ we have  $x^s=x^{k_j}$. 
 
Since $x^s=x^{k_j}$ for all $j\in\N\cup\{0\}$ and all $s\in\{k_j,k_j+1,\ldots,k_{j+1}-1\}$, and since $x^{k+1}=T_{k}(x^k)$ for each $k\in\N\cup\{0\}$, we have 
\begin{equation}
x^{k_{j+1}}=T_{k_{j+1}-1}(x^{k_{j+1}-1})=\ldots=T_{k_{j+1}-1}(x^{k_{j}}).
\end{equation}
If $x^{k_j}\in A_{c(k_{j+1}-1)}$ for some $j\in\N\cup\{0\}$, then $x^{k_j}=P_{A_{c(k_{j+1}-1)}}(x^{k_j})$, and since $x^{k_{j+1}}=T_{k_{j+1}-1}(x^{k_{j}})$ as follows from previous lines, we have 
\begin{equation*}
x^{k_{j+1}}=T_{k_{j+1}-1}(x^{k_{j}})=(1-\lambda_{k_{j+1}-1})x^{k_j}+\lambda_{k_{j+1}-1} P_{A_{c(k_{j+1}-1)}}(x^{k_j})=x^{k_j},
\end{equation*}
a contradiction to the construction of $(x^{k_t})_{t=0}^{\infty}$. Therefore, $x^{k_j}\notin A_{c(k_{j+1}-1)}$ for all $j\in\N\cup\{0\}$. Now, either $A_{c(k_{j+1}-1)}$ is a half-space, and then the relation $x^{k_j}\notin A_{c(k_{j+1}-1)}$ implies that $T_{k_{j+1}-1}(x^{k_{j}})$  is the relaxed projection of $x^{k_{j}}$ onto the boundary hyperplane $\wh{A}_{c(k_{j+1}-1)}$ of $A_{c(k_{j+1}-1)}$, or $A_{c(k_{j+1}-1)}$ is a hyperplane, and then $T_{k_{j+1}-1}(x^{k_{j}})$ is the relaxed projection of $x^{k_{j}}$ onto this hyperplane, which is equal to $\wh{A}_{c(k_{j+1}-1)}$ by the definition of $\wh{A}_{c(k_{j+1}-1)}$.

Define a control $\wh{c}:\N\cup\{0\}\to I$ by $\wh{c}(j):=c(k_{j+1}-1)$ for all $j\in\N\cup\{0\}$. In addition, let $\wh{T}_j:=T_{k_{j+1}-1}$ and $\wh{x}^j:=x^{k_j}$ for all $j\in\N\cup\{0\}$. From previous lines we conclude that 
\begin{equation}
\wh{x}^{j+1}=x^{k_{j+1}}=T_{k_{j+1}-1}(x^{k_{j}})=\wh{T}_{j}(\wh{x}^j)\end{equation}
for all $j\in\N\cup\{0\}$. Thus, we conclude from Theorem \bref{thm:BauschkeTung} (in which we take $\wh{\mathcal{A}}$ instead of $\mathcal{A}$, $\wh{c}$ instead of $c$, $(\wh{T}_k)_{k=0}^{\infty}$ instead of $(T_k)_{k=0}^{\infty}$, and $(\wh{x}^k)_{k=0}^{\infty}$ instead of $(x^k)_{k=0}^{\infty}$) that $(\wh{x}^k)_{k=0}^{\infty}$ is bounded by $\|\wh{x}^0\|+\lambda \mu_{\wh{\mathcal{A}},\lambda}$. Since the set $\{\|x^k\|\,|\, k\in\N\cup\{0\}\}$ is equal to the set $\{\|\wh{x}^j\|\,|\, j\in\N\cup\{0\}\}$ by the construction of $(\wh{x}^j)_{j=0}^{\infty}$, it follows that the sequence $(x^k)_{k=0}^{\infty}$ is bounded by $\|\wh{x}^0\|+\lambda \mu_{\wh{\mathcal{A}},\lambda}$, which is equal to $\|x^0\|+\lambda \mu_{\wh{\mathcal{A}},\lambda}$ since $\wh{x}^0=x^0$.
\end{proof}

\section{Dynamic string-averaging of relaxed projections}\label{sec:Dynamic-string-averaging}
In this section  we consider the dynamic string-averaging algorithmic scheme, introduced in \cite{CensorElfvingHerman2001incol}, and show that Theorem \bref{thm:BauschkeTung} can be generalized to sequences generated by dynamic weighted sums of dynamic strings (of dynamic lengths) of relaxed projections. Dynamic string-averaging (of a more restricted form) is used in various algorithmic schemes, such as  \cite{BargetzReichZalas2018jour,BrookeCensorGibali2023jour,CensorNisenbaum2021jour,CensorSegal2009jour,CensorTom2003jour,
CensorZaslavski2013jour,PenfoldSchulteCensorBashkirovMcAllisterSchubertRosenfeld2010incol}.

\begin{thm}\label{thm:BoundedString}
Suppose that $X$ is a real Hilbert space, $m\in\N$ and $\lambda\in (0,2)$. Suppose further that $\mathcal{A}:=(A_i)_{i=1}^m$ is a family of $m$ closed affine subspaces in $X$ which is innately regular. Let $\mu_{\mathcal{A},\lambda}$ be the positive constant associated with $\lambda$ and $\mathcal{A}$, as ensured by Theorem \bref{thm:BauschkeTung}. For each $k\in\N\cup\{0\}$ assume that $J_k$ is a nonempty finite set and $w_k:J_k\to [0,1]$ is a weight function. For each $k\in\N\cup\{0\}$ and  each $j\in J_k$ let $\ell_k(j)\in\N$ be an arbitrary natural number, and consider an arbitrary control $t_k:\{(j,1),(j,2),\ldots,(j,\ell_k(j))\}\to I:=\{1,2,\ldots,m\}$. For each $k\in\N\cup\{0\}$, $j\in J_k$, and $h\in\{1,2,\ldots,\ell_k(j)\}$ let $\lambda_{k,h}\in [0,\lambda]$ be arbitrary and let the operator $T_{k,j,h}:X\to X$ be defined as
\begin{equation}
T_{k,j,h}:=(1-\lambda_{k,h})\Id+\lambda_{k,h}P_{A_{t_k(j,h)}},
\end{equation}
namely $T_{k,j,h}(x)$ is the relaxed projection of $x\in X$ onto the closed affine subspace  $A_{t_k(j,h)}$ with relaxation coefficient $\lambda_{k,h}$. For each $k\in\N\cup\{0\}$ and $j\in J_k$, denote the product 
\begin{equation}\label{eq:S(k,j)}
S(k,j):=T_{k,j,\ell_k(j)}T_{k,j,\ell_k(j)-1}\cdots T_{k,j,1},
\end{equation}
namely $S(k,j)$ is the relaxed string operator obtained by composing the operators $T_{k,j,h}$ in the above order (from $h:=1$ to $h:=\ell_{k}(j)$, where $T_{k,j,1}$ is the right-most factor and $T_{k,j,\ell_k(j)}$ is the left-most factor). For an arbitrary $x^0\in X$ define recursively the sequence $(x^k)_{k=0}^{\infty}$ by the iterative process 
\begin{equation}\label{eq:string}
x^{k+1}:=\sum_{j\in J_k}w_k(j)S(k,j)(x^k), \quad \forall k\in\N\cup\{0\}.
\end{equation}
In other words, $x^{k+1}$ is a weighted sum of string operators, where the weights, the strings and the lengths of the strings vary with $k\in\N\cup\{0\}$ and $j\in J_k$. 

Then $\|x^k\|\leq \|x^0\|+\lambda \mu_{\mathcal{A},\lambda}$ for each $k\in\N\cup\{0\}$, namely $(x^k)_{k=0}^{\infty}$ is bounded by the same bound which appears in Theorem \bref{thm:BauschkeTung} in relation to the relaxation coefficient $\lambda$ and the innately regular family $\mathcal{A}$.
\end{thm}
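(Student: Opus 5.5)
The plan is to show that each iterate $x^k$ is a convex combination of points of the form $S_{N-1}\cdots S_0(x^0)$ with every $S_i\in\mathcal{R}_{\mathcal{A},\lambda}$. Once that is established, the uniform bound in the second formulation of Theorem \bref{thm:BauschkeTung} applies to each such point, and convexity of the norm transfers it to $x^k$. The naive approach fails: bounding $\|S(k,j)(x^k)\|\leq\|x^k\|+\lambda\mu_{\mathcal{A},\lambda}$ at each step accumulates an error of $\lambda\mu_{\mathcal{A},\lambda}$ per iteration. So the affine structure must be used to unfold the recursion all the way back to $x^0$.

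\textbf{Step 1: affinity.} As in the proof of Theorem \bref{thm:SummablePerturbations}, we have $P_{A_i}=P_{\wt{A}_i}+p_{A_i}$. Hence each $T_{k,j,h}$ equals a linear operator plus a constant vector, namely $(1-\lambda_{k,h})\Id+\lambda_{k,h}P_{\wt{A}_{t_k(j,h)}}$ plus $\lambda_{k,h}p_{A_{t_k(j,h)}}$, so it is affine. A composition of affine operators is affine, so each string $S(k,j)$ is affine. An affine operator $T$ preserves convex combinations: if $\sum_i\alpha_i=1$ and $\alpha_i\geq 0$, then $T(\sum_i\alpha_i y_i)=\sum_i\alpha_i T(y_i)$. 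Indeed, the constant part is reproduced exactly because the weights sum to $1$.

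\textbf{Step 2: expansion by induction on $k$.} We claim that for every $k\in\N$,
\[
x^k=\sum_{(j_0,\ldots,j_{k-1})\in J_0\times\cdots\times J_{k-1}}\Big(\prod_{r=0}^{k-1}w_r(j_r)\Big)\,S(k-1,j_{k-1})\cdots S(0,j_0)(x^0).
\]
The case $k=1$ is \beqref{eq:string} itself. For the inductive step, plug the formula for $x^k$ into \beqref{eq:string}. Then use Step 1 to pull each $S(k,j_k)$ inside the convex combination. The resulting weights $\prod_{r=0}^{k}w_r(j_r)$ are nonnegative and sum to $\sum_{j_k}w_k(j_k)\cdot 1=1$.

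\textbf{Step 3: conclusion.} Each composite $S(k-1,j_{k-1})\cdots S(0,j_0)$ is, by \beqref{eq:S(k,j)}, a finite product of operators of the form $(1-\tau)\Id+\tau P_{A}$ with $\tau=\lambda_{r,h}\in[0,\lambda]$ and $A\in\{A_i\,|\,i\in I\}$, so every factor lies in $\mathcal{R}_{\mathcal{A},\lambda}$. It is irrelevant that the relaxation parameters or strings differ between factors, since Theorem \bref{thm:BauschkeTung} allows arbitrary finite products from $\mathcal{R}_{\mathcal{A},\lambda}$. Thus each term has norm at most $\|x^0\|+\lambda\mu_{\mathcal{A},\lambda}$. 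By the triangle inequality and the fact that the weights form a convex combination, $\|x^k\|\leq\|x^0\|+\lambda\mu_{\mathcal{A},\lambda}$. The case $k=0$ is trivial.

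The only genuinely delicate point is Step 1, the commutation of the averaging with the strings, which is exactly where affinity is indispensable. The rest is bookkeeping of the multi-index weights.
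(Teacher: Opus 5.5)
Your proposal is correct and follows the paper's own proof. The paper also uses its lemma on ``linear convex'' operators (a class meant to contain the affine ones) to show that each string $S(k,j)$ commutes with convex combinations, unfolds the recursion into a convex combination of the points $S(k,j_k)\cdots S(0,j_0)(x^0)$, bounds each of these by the Bauschke--Tung theorem as finite products from $\mathcal{R}_{\mathcal{A},\lambda}$, and concludes with the triangle inequality. Your version via affinity, with the weights explicitly required to sum to $1$, is if anything the more precise statement of the property used, since the paper's lemma leaves that condition implicit.
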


\begin{remark}
In the setting of Theorem \bref{thm:BoundedString}, if we start with some arbitrary control $c:\N\cup\{0\}\to I$, let $J_k:=\{1\}$, $\ell_k(1):=1$ and $t_k(1,1):=c(k)$  for each $k\in\N\cup\{0\}$, then $w_k(j)=1$ and $S(k,j)=T_{k,1,1}=(1-\lambda_{k,1})\Id+\lambda_{k,1}P_{A_{c(k)}}$ for each $k\in\N\cup\{0\}$ and each $j\in J_k$; thus, by denoting $T_k:=T_{k,1,1}$ we have $x^{k+1}=T_{k,1,1}x^k$ for each $k\in\N\cup\{0\}$, namely we obtain the setting of Theorem \bref{thm:BauschkeTung} which is a sequential iterative scheme. 

As another example, if we let $J_k:=\{1,2,\ldots,m\}$, $\ell_k(1):=1$ and $t_k(j,1):=j$ for each $k\in\N\cup\{0\}$ and each $j\in J_k$, and if for all $k\in\N\cup\{0\}$ we take $w_k:J_k\to [0,1]$ to be any weight function, then $T_{k,j,\ell_k(j)}=T_{k,j,1}=(1-\lambda_{k,1})\Id+\lambda_{k,1}P_{A_j}$ and $x^{k+1}=\sum_{j=1}^m w_k(j)T_{k,j,1}(x^k)$, namely this is a so-called simultaneous iterative scheme. 

As a third example, let $J_k:=\{0,\ldots,k\}$ and $\ell_k(j):=j+1$ for each $k\in\N\cup\{0\}$ and each $j\in J_k$. Moreover, let $t_k:\{(j,1),(j,2),\ldots,(j,j+1)\}\to I$ be arbitrary and let $w_k(j):=2(j+1)/((k+1)(k+2))>0$ for every $k\in\N\cup\{0\}$ and $j\in J_k$. Then indeed $\sum_{j\in J_k}w_k(j)=1$ and 
\begin{equation*}
x^{k+1}=\sum_{j=0}^k \frac{2(j+1)}{(k+1)(k+2)}T_{k,j,j+1} T_{k,j,j}\ldots T_{k,j,1}(x^k)
\end{equation*}
for each $k\in\N\cup\{0\}$. Many other examples can be given, and this shows the generality of Theorem \bref{thm:BoundedString}.
\end{remark}

In order to prove Theorem \bref{thm:BoundedString}, we need the following simple and known lemma.

\begin{lem}\label{lem:LinearConvex}
Given a real or complex vector space $X$, the set of all linear convex operators (see Section \bref{sec:Preliminaries} for the definition of this concept) from $X$ to itself is closed under addition, multiplication by scalar, and composition. This set contains the set of affine operators. Moreover, for all linear convex operator $T:X\to X$ and any finite number $q\in\N$ of numbers $\alpha_1,\ldots,\alpha_{q}\in [0,1]$  and vectors $x_1,\ldots,x_{q}\in X$, one has $T(\sum_{j=1}^q\alpha_jx_j)=\sum_{j=1}^q\alpha_jT(x_j)$.
\end{lem}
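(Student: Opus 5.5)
The plan is to verify each assertion directly from the definition, and then get the finite-sum identity by induction on $q$. I will read the defining identity $T(\alpha x+\beta y)=\alpha T(x)+\beta T(y)$ as required for convex weights $\alpha,\beta\in[0,1]$ with $\alpha+\beta=1$. In the last assertion I likewise take $\sum_{j=1}^q\alpha_j=1$. This is the reading suggested by the name ``linear convex'', and it is exactly what is needed in \beqref{eq:string}, where $w_k$ is a weight function. It is also the reading under which the containment of affine operators holds as stated.

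\emph{Closure properties.} Let $T_1,T_2$ be linear convex, let $\gamma$ be a scalar, and let $\alpha+\beta=1$.
\begin{itemize}
\item For the sum, $(T_1+T_2)(\alpha x+\beta y)=\alpha T_1x+\beta T_1y+\alpha T_2x+\beta T_2y=\alpha(T_1+T_2)x+\beta(T_1+T_2)y$.
\item For the scalar multiple, $(\gamma T_1)(\alpha x+\beta y)=\gamma(\alpha T_1x+\beta T_1y)=\alpha(\gamma T_1)x+\beta(\gamma T_1)y$.
\item For the composition, apply the definition first to $T_2$ and then to $T_1$ at the points $T_2x,T_2y$ with the same weights: $T_1T_2(\alpha x+\beta y)=T_1(\alpha T_2x+\beta T_2y)=\alpha T_1T_2x+\beta T_1T_2y$.
\end{itemize}

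\emph{Affine operators.} Write $T=p_T+\wt{T}$ with $\wt{T}$ linear. Then
\[
T(\alpha x+\beta y)=p_T+\alpha\wt{T}x+\beta\wt{T}y=(\alpha+\beta)p_T+\alpha\wt{T}x+\beta\wt{T}y=\alpha T(x)+\beta T(y).
\]
The only point where a constraint is used is the step $p_T=(\alpha+\beta)p_T$, which is why convex weights are needed.

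\emph{Finite convex combinations.} I argue by induction on $q$. The case $q=1$ is trivial, since then $\alpha_1=1$, and $q=2$ is the definition. Assume the claim for $q-1$ and take weights $\alpha_1,\dots,\alpha_q$ summing to $1$.
\begin{itemize}
\item If $\alpha_q=1$, all other weights vanish and the claim is immediate.
\item Otherwise set $s:=1-\alpha_q\in(0,1]$ and $z:=\sum_{j=1}^{q-1}(\alpha_j/s)x_j$, which is a convex combination of $q-1$ points.
\item Then $\sum_{j=1}^q\alpha_jx_j=s z+\alpha_q x_q$, so the two-point identity gives $T(\cdot)=sT(z)+\alpha_qT(x_q)$.
\item The induction hypothesis applied to $T(z)$ finishes the step.
\end{itemize}

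The only delicate point is the interpretation of the weights. With them fixed as convex, every step is a one-line computation. The single case needing care in the induction is the degenerate one $\alpha_q=1$, where the renormalisation by $s$ is unavailable, and it is handled separately above.
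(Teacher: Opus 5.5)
Your proof is correct and follows the same route as the paper, which only says that the lemma is immediate from the definition and induction on $q$. Your explicit checks of the closure properties and your renormalisation step $z=\sum_{j<q}(\alpha_j/s)x_j$ supply exactly the details that sentence leaves implicit.

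Your insistence on convex weights ($\alpha+\beta=1$ in the definition, $\sum_{j=1}^q\alpha_j=1$ in the last assertion) is a necessary correction, not a deviation. With arbitrary $\alpha,\beta\in[0,1]$, the choice $\alpha=\beta=0$ forces $T(0)=0$ and so excludes affine maps with $p_T\neq 0$. Without $\sum_j\alpha_j=1$, the finite-sum identity fails for such maps (take $q=1$, $\alpha_1=0$). The convex reading is also precisely the one used in the proof of Theorem \bref{thm:BoundedString}.
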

\begin{proof}
The proof is immediate from the definition and mathematical induction.
\end{proof}

\begin{proof}[Proof of Theorem \bref{thm:BoundedString}]
Lemma \bref{lem:LinearConvex} and \beqref{eq:S(k,j)} ensure that $S(k,j)$ is linear convex. Since from \beqref{eq:string} we have $x^1=\sum_{j_0\in J_0}w_0(j_0)S(0,j_0)(x^0)$, and since $w_0$ is a weight function, it follows from Lemma \bref{lem:LinearConvex} again that 
\begin{align*}
x^2 &=\sum_{j_1\in J_1}w_1(j_1)S(1,j_1)(x^1)\\
&=\sum_{j_1\in J_1}w_1(j_1)S(1,j_1)\left(\sum_{j_0\in J_0}w_0(j_0)S(0,j_0)(x^0)\right)\\
&=\sum_{j_1\in J_1}\sum_{j_0\in J_0}w_1(j_1)w_0(j_0)S(1,j_1)(S(0,j_0)(x^0)).
\end{align*}
Similarly, Lemma \bref{lem:LinearConvex} and induction imply that for all $k\in\N\cup\{0\}$,  
\begin{align}\label{eq:sumS(j_k,j_0)}
x^{k+1}&=\sum_{j_k\in J_k}\ldots\sum_{j_0\in J_0}w_k(j_k)\ldots w_0(j_0)S(j_k,\ldots,j_0)(x^0),
\end{align}
where $S(j_k,\ldots,j_0)$ is the string operator defined as 
\begin{align*}
S(j_k,\ldots,j_0)&:=S(k,j_k)S(k-1,j_{k-1})\cdots S(0,j_0)\\
&=(T_{k,j_k,\ell_k(j_k))}\cdots T_{k,j_k,1}) \cdots (T_{k,j_0,\ell_0(j_0)} \cdots T_{k,j_0,1})
\end{align*}
for all $k\in\N\cup\{0\}$ and all $(j_k,\ldots,j_0)\in J_k\times\ldots \times J_0$. Since $S(j_k,\ldots,j_0)$ is a finite product of operators (of length $\ell_k(j_k)+\ell_{k-1}(j_{k-1})+\ldots+\ell_{0}(j_0)$) from the set $\mathcal{R}_{\mathcal{A},\lambda}$ (see \beqref{eq:R_A_lambda}), it follows from Theorem \bref{thm:BauschkeTung} that $\|S(j_k,\ldots,j_0)(x^0)\|\leq \|x^0\|+\lambda \mu_{\mathcal{A},\lambda}$ for all $k\in\N\cup\{0\}$ and all $(j_k,\ldots,j_0)\in J_k\times\ldots \times J_0$. This inequality, \beqref{eq:sumS(j_k,j_0)}, the triangle inequality, induction, and the fact that $w_k$ is a weight function for each $k\in\N\cup\{0\}$, all imply that 
\begin{align*}
\|x^{k+1}\|&\leq \sum_{j_k\in J_k}\ldots\sum_{j_0\in J_0}w_k(j_k)\cdots w_0(j_0)\|S(j_k,\ldots,j_0)(x^0)\|\\
&\leq  \sum_{j_k\in J_k}\ldots\sum_{j_0\in J_0}w_k(j_k)\cdots w_0(j_0)(\|x^0\|+\lambda \mu_{\mathcal{A},\lambda})\\
&=\sum_{j_k\in J_k}\ldots\sum_{j_1\in J_1}w_k(j_k)\cdots w_1(j_1)\left((\|x^0\|+\lambda \mu_{\mathcal{A},\lambda})\sum_{j_0\in J_0}w_0(j_0)\right)\\
&= \sum_{j_k\in J_k}\ldots\sum_{j_1\in J_1}w_k(j_k)\cdots w_1(j_1)((\|x^0\|+\lambda \mu_{\mathcal{A},\lambda})\cdot 1)\\
&=\ldots=(\|x^0\|+\lambda \mu_{\mathcal{A},\lambda})\sum_{j_k\in J_k}w_k(j_k)=\|x^0\|+\lambda \mu_{\mathcal{A},\lambda}
\end{align*}
 for every $k\in\N\cup\{0\}$, as claimed.
\end{proof}

\section{Limitations on the possibility to extend Theorem \bref{thm:BauschkeTung}}\label{sec:Limitations-on-the}
This section shows, by means of several examples, that Theorem \bref{thm:BauschkeTung} cannot be extended without further assumptions to arbitrary nonempty, closed and convex subsets. A different example, for the case where $\mathcal{A}$ was a family of two closed affine subspaces and the operators were orthogonal projections, was given in \cite[Example 5.3]{BauschkeTung2026prep}. We explain below (Example \bref{ex:Aleph}) why there are at least $2^{{\aleph}_0}$ examples in which the associated control is non-cyclic and not almost cyclic. We need a few auxiliary results before presenting the examples.

\begin{lem}\label{lem:BruckreichCorollary1.4}
If $X$ is a real Hilbert space and $T:X\to X$ is strongly nonexpansive, then $T$ is fixed point free if and only if $\lim_{k\to\infty}\|T^k(x)\|=\infty$ for all $x\in X$. 
\end{lem}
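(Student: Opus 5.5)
The label indicates that this is Bruck--Reich's Corollary 1.4. I would prove it by two reductions: the Pazy-type limit $\lim_k \|T^k x\|/k = d(0,\overline{\textnormal{range}}(\Id-T))$, and the asymptotic regularity of strongly nonexpansive maps.

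\emph{The easy direction.} Suppose $\|T^k(x)\|\to\infty$ for every $x\in X$. If $T$ had a fixed point $z$, then $T^k(z)=z$ for all $k$. This sequence is bounded, which is a contradiction. Hence $T$ is fixed point free.

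\emph{The converse.} Suppose $T$ is fixed point free, and fix $x\in X$.

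First step: show that $(T^k(x))_k$ has no bounded subsequence. In a Hilbert space, a nonexpansive map for which some orbit is bounded has a fixed point (Browder--G\"ohde--Kirk via asymptotic centers). So every orbit is unbounded. This alone gives only $\limsup_k\|T^k(x)\|=\infty$, not $\lim$. This gap is the main obstacle, and strong nonexpansiveness is needed to close it.

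Second step, the plan for the obstacle: use the fact that strongly nonexpansive maps are asymptotically regular in the sense of Bruck--Reich. Let $v$ be the element of least norm in $\overline{\textnormal{range}}(\Id-T)$; it exists because this closed set is convex for nonexpansive $T$ in Hilbert space. Then
\[
T^k(x)-T^{k+1}(x)\to v.
\]
Bruck--Reich derive this from strong nonexpansiveness applied to the pairs $x^k:=T^k(x)$ and $y^k:=T^k(T(x))$. Indeed, $\|x^k-y^k\|$ is nonincreasing, so it converges, and hence the difference in the definition of strong nonexpansiveness tends to $0$. This yields that $(\Id-T)T^k(x)$ converges, and a standard argument identifies the limit as $v$. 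We then split into cases.

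\emph{Case $v\neq 0$.} Cesàro averaging gives $T^k(x)/k\to -v$. Hence $\|T^k(x)\|\ge k\|v\|/2$ for large $k$, and the norms tend to $\infty$.

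\emph{Case $v=0$.} Then $\inf\|x-Tx\|=0$. Here I would use the Bruck--Reich result that for strongly nonexpansive $T$ on a Hilbert space, bounded subsequences of orbits force a fixed point. Concretely, suppose $\|T^{k_n}(x)\|\le M$ along some subsequence. Since $\|(\Id-T)T^k(x)\|\to 0$, every weak cluster point $z$ of $(T^{k_n}(x))_n$ satisfies $z=Tz$ by the demiclosedness principle. This contradicts the assumption that $T$ is fixed point free.

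In both cases no bounded subsequence exists, so $\|T^k(x)\|\to\infty$.

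Alternatively, since the lemma is cited as Bruck--Reich Corollary 1.4, I could simply invoke that corollary directly: it is stated for uniformly convex Banach spaces, which include Hilbert spaces.
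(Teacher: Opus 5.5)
\textbf{A gap in your main route.} The inference ``strong nonexpansiveness applied to $x^k:=T^k(x)$, $y^k:=T^{k+1}(x)$ yields that $(\Id-T)T^k(x)$ converges'' is not valid. Write $u_k(x):=T^k(x)-T^{k+1}(x)$. That pair only gives $u_k(x)-u_{k+1}(x)\to 0$. Together with the monotonicity of $\|u_k(x)\|$, this implies neither that $(u_k(x))_k$ converges (think of $(\cos\log(k+1),\sin\log(k+1))$ in $\R^2$) nor that $\|u_k(x)\|\to\|v\|$. This matters: your case $v=0$ rests entirely on $\|u_k(x)\|\to 0$, which is exactly what the demiclosedness argument needs.

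The missing idea is to compare orbits with \emph{different} starting points. For any $y\in X$, the sequence $\|T^k(x)-T^k(y)\|$ is bounded and nonincreasing. So strong nonexpansiveness applied to $(T^k(x),T^k(y))$ gives $u_k(x)-u_k(y)\to 0$. Since $\|u_k(y)\|\le\|y-T(y)\|$, this yields $\lim_k\|u_k(x)\|\le\|y-T(y)\|$ for every $y$. Hence $\lim_k\|u_k(x)\|=\|v\|$; the reverse inequality is clear because $u_k(x)\in\textnormal{range}(\Id-T)$. Finally, $v$ is the projection of $0$ onto the closed convex set $\overline{\textnormal{range}}(\Id-T)$. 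Therefore $\langle v,u_k(x)\rangle\ge\|v\|^2$, and so $\|u_k(x)-v\|^2\le\|u_k(x)\|^2-\|v\|^2\to 0$. With this repair your case analysis is correct. Note that strong nonexpansiveness is then needed only for $v=0$: for $v\neq 0$, Pazy's theorem $T^k(x)/k\to -v$ already holds for every nonexpansive self-map of a Hilbert space.

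\textbf{Comparison with the paper.} The paper's proof is your closing alternative, carried out by checking hypotheses. It applies Bruck--Reich's Corollary 1.4 with $C:=X$, verifying two things:
\begin{enumerate}
\item $X$ is boundedly weakly compact and convex, since closed balls are weakly compact;
\item every nonempty bounded convex subset has the fixed point property for nonexpansive maps, by Browder--G\"ohde--Kirk.
\end{enumerate}
It then gives the same trivial argument as yours for the other direction. If you take that route, these are the hypotheses you must verify, rather than just asserting that the corollary covers uniformly convex spaces.

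Your repaired route gives a self-contained Hilbert-space proof. It shows where strong nonexpansiveness enters (asymptotic regularity when $0\in\overline{\textnormal{range}}(\Id-T)$), and it replaces the abstract fixed point property by demiclosedness of $\Id-T$. The paper's route is much shorter but defers all of the substance to the cited corollary.
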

\begin{proof}
This known lemma is just a special case of a result of Bruck and Reich \cite[Corollary 1.4]{BruckReich1977jour}, where, in the notation of that result,  $C:=X$. 

Indeed, the intersection of $C$ with any closed ball is the  closed ball itself which is a weakly compact and convex subset \cite[Theorem 3.37]{BauschkeCombettes2017book}, and so $C$ is a nonempty boundedly weakly compact and convex subset of $X$. Now, since in a Hilbert space a convex subset is weakly compact if and only if it is closed and bounded (as follows from \cite[Lemma 2.36, Theorem 3.34, Theorem 3.37]{BauschkeCombettes2017book}), the well-known Browder-G{\"o}hde-Kirk theorem \cite[Theorem 4.29]{BauschkeCombettes2017book} ensures that any nonexpansive mapping from a nonempty, convex and bounded subset to itself has a fixed point, namely any nonempty, convex and bounded subset of $C$ has the fixed point property for nonexpansive mappings.

Thus, all the conditions stated in \cite[Corollary 1.4]{BruckReich1977jour} are fulfilled, and one concludes that if $T$ is fixed point free, then $\lim_{k\to\infty}\|T^k(x)\|=\infty$ for all $x\in X$. The converse direction obviously holds since if $\lim_{k\to\infty}\|T^k(x)\|=\infty$ for all $x\in X$, then the fixed point set of $T$ must be empty, otherwise, if $x$ is a fixed of $T$, then $x=T(x)=\ldots=T^k(x)$ for all $k\in\N$ and so $\lim_{k\to\infty}\|T^k(x)\|=\|x\|<\infty$, a contradiction. 
\end{proof}

\begin{thm}\label{thm:UniformInfty}
Suppose that $(X,\|\cdot\|)$ is a normed space and let $I$ be a nonempty set of indices (finite or infinite). Let $(T_i)_{i\in I}$ be a family of nonexpansive mappings from $X$ to itself. These mappings may coincide. Let $c:\N\cup\{0\}\to I$ be an arbitrary control. Consider the product sequence $(S_k)_{k=0}^{\infty}$ of operators defined by $S_0:=T_{c(0)}$, $S_1:=T_{c(1)}T_{c(0)}$, $S_2:=T_{c(2)}T_{c(1)}T_{c(0)}$, and so on, that is, for each $k\in\N\cup\{0\}$, we have $S_{k}:=T_{c(k)}T_{c(k-1)}\cdots T_{c(0)}$. 
\begin{enumerate}[(a)]
\item\label{item:BoundedSome_x} If $(S_k(x))_{k=0}^{\infty}$ is bounded for some $x\in X$, then $(S_k)_{k=0}^{\infty}$ is uniformly bounded on all nonempty and bounded subsets of $X$, namely for each nonempty and bounded set  $B\subseteq X$ there exists some $r_B>0$ such that $\|S_k(y)\|<r_B$ for all $k\in\N\cup\{0\}$ and all $y\in B$. In particular, $(S_k(y))_{k=0}^{\infty}$ is bounded for all $y\in X$.

\item $(S_k(x))_{k=0}^{\infty}$ is bounded for some $x\in X$ if and only if $(S_k(x))_{k=0}^{\infty}$ is bounded for all $x\in X$.
 
\item\label{item:UnBoundedSome_x} If $\lim_{k\to\infty}\|S_{k}(x)\|=\infty$ for some $x\in X$, then  actually $\lim_{k\to\infty}\|S_{k}(\cdot)\|=\infty$ uniformly on nonempty and bounded sets, that is, for each nonempty and bounded set $B\subseteq X$ and for each $r>0$, there exists some $\wt{k}\in\N$ such that $\|S_{k}(y)\|>r$ for all $\wt{k}<k\in\N\cup\{0\}$ and all $y\in B$. In particular, $\lim_{k\to\infty}\|S_{k}(y)\|=\infty$ for all $y\in X$. 

\item $(S_k(x))_{k=0}^{\infty}$ is unbounded for some $x\in X$ if and only if $(S_k(x))_{k=0}^{\infty}$ is unbounded for all $x\in X$.

\item\label{item:FixedPointFree} If $X$ is a real Hilbert space and $T: X\to X$ is strongly nonexpansive and is fixed point free, then $\lim_{k\to \infty}\|T^k(\cdot)\|=\infty$ uniformly on nonempty and bounded subsets of $X$. 
\end{enumerate} 
\end{thm}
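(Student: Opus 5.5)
The whole theorem rests on one observation: each $S_k$ is a composition of nonexpansive mappings, hence itself nonexpansive. So for all $x,y\in X$ and all $k\in\N\cup\{0\}$,
\begin{equation*}
\|S_k(y)-S_k(x)\|\leq \|y-x\|.
\end{equation*}
Combined with the triangle inequality, this gives the two-sided estimate
\begin{equation*}
\|S_k(x)\|-\|y-x\|\leq \|S_k(y)\|\leq \|S_k(x)\|+\|y-x\|.
\end{equation*}
The plan is to derive parts (a)--(d) directly from this estimate, and to obtain (e) from Lemma \bref{lem:BruckreichCorollary1.4} together with (c).

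For (a), fix $x$ with $M:=\sup_k\|S_k(x)\|<\infty$. Let $B$ be nonempty and bounded, and set $\delta:=\sup_{y\in B}\|y-x\|<\infty$. The right inequality gives $\|S_k(y)\|\leq M+\delta$ for all $k$ and all $y\in B$, so $r_B:=M+\delta+1$ works. Taking $B=\{y\}$ gives the ``in particular'' claim. Part (b) is then immediate: one direction is (a), and the other is trivial because $X\neq\emptyset$.

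For (c), fix $x$ with $\|S_k(x)\|\to\infty$, and let $B$, $\delta$ be as before. Given $r>0$, choose $\wt{k}$ such that $\|S_k(x)\|>r+\delta$ for all $k>\wt{k}$. The left inequality then yields $\|S_k(y)\|>r$ for every $y\in B$ and every $k>\wt{k}$, which is the uniform divergence. Part (d) is the contrapositive of (b): if $(S_k(x))_k$ is unbounded for some $x$, then by (b) it cannot be bounded for any $y$. The converse direction of (d) is again trivial.

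For (e), take $I:=\{1\}$, $T_1:=T$ and the constant control $c\equiv 1$, so that $S_k=T^{k+1}$. A strongly nonexpansive map is in particular nonexpansive, so the setting of the theorem applies. Lemma \bref{lem:BruckreichCorollary1.4} gives $\|T^k(x)\|\to\infty$ for every $x$, and (c) upgrades this to divergence that is uniform on bounded sets. The index shift between $T^{k+1}$ and $T^k$ is harmless.

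There is no real obstacle here. The only points needing care are that compositions of nonexpansive maps remain nonexpansive, which holds in any normed space, and keeping the quantifiers in (c) in the right order: $\wt{k}$ depends on $r$ and on $B$ only through $\delta$.
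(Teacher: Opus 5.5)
Your proof is correct. For (a), (c) and (e) it uses the same mechanism as the paper: nonexpansiveness of each $S_k$ combined with the triangle inequality. The paper proves (a) and (c) by contradiction. It first treats bounded sets containing $x$, with $\textnormal{diam}(B)$ as the error term, and then passes to $B\cup\{x\}$. Your direct use of $\delta=\sup_{y\in B}\|y-x\|$ in the two-sided estimate is shorter and gives explicit constants: $r_B=M+\delta+1$, and $\wt{k}$ read off from the threshold $r+\delta$.

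The one genuine difference is (d). The paper deduces it by applying (c) to $B=\{y\}$. However, (c) requires $\lim_{k\to\infty}\|S_k(x)\|=\infty$, and this does not follow from mere unboundedness. For example, on $\R$, compose the translation $t\mapsto t+1$ and the constant map $t\mapsto 0$ along the control $1,2,1,1,2,1,1,1,2,\ldots$. The resulting orbit is unbounded, yet $\liminf_{k}|S_k(x)|=0$, so (c) cannot be invoked. Your route treats (d) as the contrapositive of (a)/(b), needs no such hypothesis, and is the right argument. Alternatively, one can apply the left-hand inequality along a subsequence on which $\|S_k(x)\|\to\infty$.

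In (e), your explicit remark that $S_k=T^{k+1}$ also fills in an index shift that the paper leaves implicit.
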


\begin{proof}
\begin{enumerate}[(a)]
\item We first prove the assertion for the particular case of bounded subsets of $X$ which contain $x$. Suppose, by way of contradiction, that this particular case of the assertion is false. Then there exists some bounded set $B$  which contains $x$ such that for all $r>0$, in particular for all $r\in\N$, there exists some $k_r\in\N$ and $y_{k_r}\in B$ such that $\|S_{k_r}(y_{k_r})\|\geq r+\textnormal{diam}(B)$ (since $B$ is bounded, its diameter $\textnormal{diam}(B)$ is finite).

Because $T_j$ is nonexpansive for each $j\in I$, also any finite composition of these mappings is a nonexpansive mapping, and so $S_k$ is nonexpansive for each $k\in\N\cup\{0\}$. Since both $x$ and $y_{k_r}$ are in $B$, we have $\|S_{k_r}(y_{k_r})-S_{k_r}(x)\|\leq \|y_{k_r}-x\|\leq \textnormal{diam}(B)<\infty$. The previous inequalities and the triangle inequality imply that
\begin{align*}
\begin{split}
r+\textnormal{diam}(B)\leq \|S_{k_r}(y_{k_r})\|&\leq \|S_{k_r}(x)\|+\|S_{k_r}(y_{k_r})-S_{k_r}(x)\|\\
&\leq\|S_{k_r}(x)\|+\textnormal{diam}(B).
\end{split}
\end{align*}
As a result, for each $r\in\N\cup\{0\}$ we can find some $k_r\in\N\cup\{0\}$ such that $r\leq \|S_{k_r}(x)\|$, and so the sequence $(S_k(x))_{k=0}^{\infty}$ is not bounded, a contradiction to what we assumed in the formulation of the assertion. Thus, the particular case of the assertion concerning bounded subsets of $X$ which contain $x$ is true. 

If $B$ is nonempty and bounded and does not contain $x$, then $B\cup\{x\}$ is non-empty and bounded and contains $x$, and so the assertion holds for it by what we showed above. Hence there is $\wt{r}>0$ such that $\|S_k(y)\|<\wt{r}$ for all $y\in B\cup\{x\}$ and in particular for all $y\in B$, as required. Finally, given $y\in X$, if we denote $B:=\{y\}$, then $B$ is bounded and what we showed above shows that $(S_k(y))_{k=0}^{\infty}$ is bounded. 

\item Obviously if $(S_k(x))_{k=0}^{\infty}$ is bounded for all $x\in X$, then it is bounded for some $x\in X$. On the other hand, if  $(S_k(x))_{k=0}^{\infty}$ is bounded for some $x\in X$, then, from Part \beqref{item:BoundedSome_x}, it is bounded for all $x\in X$.

\item We first prove the assertion for bounded subsets $B$ of $X$ which contain $x$. The general case follows immediately from this case by working with $B\cup\{x\}$. Suppose, by way of contradiction, that the above-mentioned particular case of the assertion is false. Then there exists some bounded subset $B$ of $X$ which contains $x$, and some positive number $r>0$, and also a strictly monotone subsequence $(\ell_k)_{k=0}^{\infty}$ of natural numbers, such that for all $k\in \N\cup\{0\}$ there exists some $y_{\ell_k}\in B$ such that $\|S_{\ell_k}(y_{\ell_k})\|\leq r$. 

Because $S_k$ is nonexpansive for each $k\in\N\cup\{0\}$, as a finite composition of nonexpansive mappings, we have $\|S_{\ell_k}(x)-S_{\ell_k}(y_{\ell_k})\|\leq \|x-y_{\ell_k}\|\leq \textnormal{diam}(B)<\infty$ for all $k\in\N\cup\{0\}$. The previous inequalities and the triangle inequality imply that 
\begin{equation*} 
\|S_{\ell_k}(x)\|\leq \|S_{\ell_k}(x)-S_{\ell_k}(y_{\ell_k})\|+\|S_{\ell_k}(y_{\ell_k})\|\leq \textnormal{diam}(B)+r<\infty 
\end{equation*}
for each $k\in\N\cup\{0\}$, and so $\sup\{\|S_{\ell_k}(x)\|: k\in\N\cup\{0\}\}<\infty$. On the other hand, since we assumed that $\lim_{k\to\infty}\|S_k(x)\|=\infty$ and since $\lim_{k\to\infty}\ell_k=\infty$, we have, in particular, $\sup\{\|S_{\ell_k}(x)\|\,|\, k\in\N\cup\{0\}\}=\infty$. We obtained a contradiction which shows that the initial assumption cannot hold, namely $\lim_{k\to\infty}\|S_{k}(\cdot)\|=\infty$ uniformly on bounded sets  which contain $x$, and, as explained earlier, on all nonempty and bounded sets. 

\item Obviously if $(S_k(x))_{k=0}^{\infty}$ is unbounded for all $x\in X$, then it is unbounded for some $x\in X$. On the other hand, suppose that $(S_k(x))_{k=0}^{\infty}$ is unbounded for some $x\in X$. Then given an arbitrary $y\in X$, Part \beqref{item:UnBoundedSome_x} above with the nonempty and bounded set $B:=\{y\}$ shows that $(S_k(y))_{k=0}^{\infty}$ is unbounded, as required. 

\item This part is an immediate consequence of Lemma \bref{lem:BruckreichCorollary1.4} and Part \beqref{item:UnBoundedSome_x} above with $T_i:=T$ for all $i\in\N\cup\{0\}$. 
\end{enumerate}
\end{proof}

\begin{remark}
Theorem \bref{thm:UniformInfty} is related to, but different from, \cite[Proposition 1]{BaranyGoodmanPollack1991inbook}. That proposition  says that given a point $x$ in the Euclidean space $\R^n$, $n\in\N$, a nonempty and possibly infinite family $I$ of affine subspaces in $\R^n$, and the orthogonal projections $T_i:\R^n\to\R^n$, $i\in I$ onto the members of $I$, the product sequence $(S_{k,c}(x))_{k=0}^{\infty}$ defined by $S_{k,c}(x):=T_{c(k)}T_{c(k-1)}\cdots T_{c(0)}(x)$ for every $k\in\N\cup\{0\}$ is bounded for each control $c:I\to\N\cup\{0\}$ if and only if all of the sequences $(S_{k,c}(x))_{k=0}^{\infty}$, $c:I\to\N\cup\{0\}$ are uniformly bounded (independently of the control $c$). 

Note that in Theorem \bref{thm:UniformInfty} the uniform boundedness is with respect to the variable $x$ (which varies inside a bounded subset) and not with respect to the control $c$ (which is fixed in Theorem \bref{thm:UniformInfty}). Anyway, a careful inspection of the proof of \cite[Proposition 1]{BaranyGoodmanPollack1991inbook} shows that the proof actually holds, essentially word for word, in any normed space instead of Euclidean spaces and with any family $(T_i)_{i\in I}$ of nonexpansive mappings instead of orthogonal projections onto affine subspaces. 
\end{remark}

\begin{expl}\label{ex:(C1C2)k(C_3)}
Suppose that $X$ is a real Hilbert space of dimension 2 or above (possibly infinite). Let $C_1$ and $C_2$ be arbitrary nonempty, closed, convex and disjoint subsets having the property that the distance $dist(C_1,C_2):=\inf\{\|x-y\|\,|\,\,x\in C_1, y\in C_2\}$ between them is not attained, that is, $dist(C_1,C_2)<\|x-y\|$ for all $x\in C_1$ and $y\in C_2$. Such sets must be unbounded (see, e.g., \cite[Facts 5.1(i)]{BauschkeBorwein1994jour} or, for a more general setting, \cite[Theorem 5.1(xv)]{ReemCensor2025prep-BAP} or \cite[Theorem 1.1.]{Stiles1965b-jour}). 

For instance, if $X$ is $n$-dimensional for some $2\leq n\in \N$, then we can take $C_1:=\{(x_1,x_2,x_3,\ldots,x_n)\,|\, x_2\geq e^{x_1}+1\}$ and $C_2:=\{(x_1,x_2,x_3,\ldots,x_n)\,|\, x_2\leq -1-e^{x_1}\}$, and if $X$ is infinite dimensional then we can either take the isometric embedding of $C_1$ and $C_2$ in an $n$-dimensional subspace of $X$ (where $2\leq n\in \N$ is arbitrary) or we can choose an infinite orthonormal sequence $(u_k)_{k=1}^{\infty}$ and let $C_1:=\{\sum_{k=1}^{\infty}\xi_k u_k\,|\,(\xi_k)_{k=1}^{\infty}\in\ell_2, \,\xi_2\geq e^{\xi_1}+1\}$ and $C_2:=\{\sum_{k=1}^{\infty}\xi_k u_k\,|\,(\xi_k)_{k=1}^{\infty}\in\ell_2, \,\xi_2\leq -e^{\xi_1}-1\}$. In all of these cases $dist(C_1,C_2)=2$ but the distance is not attained. 

For an example in which both $C_1$ and $C_2$ are infinite dimensional closed affine subspaces and $dist(C_1,C_2)>0$ is not attained, see \cite[Example 4.3]{BauschkeBorwein1994jour} (based on \cite[p. 312]{FranchettiLight1986jour}; see also \cite[p. 354]{KosmolZhou1991jour}); as follows from \cite[Proposition 5.16]{BauschkeBorwein1996jour}, in this case the family $(C_1,C_2)$ is not regular and hence not innately regular. 

Now let $C_3$ be an arbitrary nonempty, closed and convex subset of $X$, let $I:=\{1,2,3\}$, and consider the control $c:\N\cup\{0\}\to I$ defined in the following manner: $(12)(3)(12)^2(3)(12)^3(3)(12)^4(3)\ldots$. In other words, $c(0):=1$, $c(1):=2$, $c(2):=3$, $c(3):=1$, $c(4):=2$, $c(5):=1$, $c(6):=2$, $c(7):=3$, $c(8):=1$, $c(9):=2$, $c(10):=1$, $c(11):=2$, $c(12):=1$, $c(13):=2$, $c(14):=3$, and so on. Let $T_k:=P_{C_{c(k)}}$ for every $k\in\N\cup\{0\}$, and let $(x^k)_{k=0}^{\infty}$ be defined as $x^{k+1}:=T_{k}(x^k)$ for all $k\in\N\cup\{0\}$, where $x^0\in X$ is arbitrary. For an illustration, 
\begin{equation}\label{eq:x1x14}
\begin{array}{ll}
x^1=P_{C_1}(x^0), & x^2=P_{C_2}P_{C_1}(x^0), \\
x^3=P_{C_3}(P_{C_2}P_{C_1})(x^0), & x^4 =P_{C_1}P_{C_3}(P_{C_2}P_{C_1})(x^0),\\
x^5=P_{C_2}P_{C_1}P_{C_3}(P_{C_2}P_{C_1})(x^0), & x^6 =P_{C_1}P_{C_2}P_{C_1}P_{C_3}(P_{C_2}P_{C_1})(x^0),\\
x^7=(P_{C_2}P_{C_1})^2P_{C_3}(P_{C_2}P_{C_1})(x^0), & x^8=P_{C_3}(P_{C_2}P_{C_1})^2P_{C_3}(P_{C_2}P_{C_1})(x^0),\\
x^9=P_{C_1}P_{C_3}(P_{C_2}P_{C_1})^2P_{C_3}(P_{C_2}P_{C_1})(x^0), & x^{10}=(P_{C_2}P_{C_1})P_{C_3}(P_{C_2}P_{C_1})^2P_{C_3}(P_{C_2}P_{C_1})(x^0), \\
x^{11}=P_{C_1}(P_{C_2}P_{C_1})P_{C_3}(P_{C_2}P_{C_1})^2P_{C_3}(P_{C_2}P_{C_1})(x^0), &
x^{12}=(P_{C_2}P_{C_1})^2P_{C_3}(P_{C_2}P_{C_1})^2P_{C_3}(P_{C_2}P_{C_1})(x^0),\\
x^{13}=P_{C_1}(P_{C_2}P_{C_1})^2P_{C_3}(P_{C_2}P_{C_1})^2P_{C_3}(P_{C_2}P_{C_1})(x^0), & 
x^{14}=(P_{C_2}P_{C_1})^3P_{C_3}(P_{C_2}P_{C_1})^2P_{C_3}(P_{C_2}P_{C_1})(x^0).
\end{array}
\end{equation}
We claim that $(x^k)_{k=0}^{\infty}$ is unbounded. Indeed, suppose for the sake of contradiction that $(x^k)_{k=0}^{\infty}$ is bounded. Then there exists some $\alpha>0$ such that 
\begin{equation}\label{eq:bounded}
\|x^k\|\leq \alpha,\quad \forall k\in\N\cup\{0\}.
\end{equation} 
For each $\ell\in\N$, let $k_{\ell}\in \N$ satisfy 
\begin{equation}\label{eq:xkl}
x^{k_{\ell}}=Q^{\ell}x^{k_{\ell}-2\ell}, 
\end{equation}
where $Q:=P_{C_2}P_{C_1}$. For instance (see \beqref{eq:x1x14}), if $\ell:=1$, then $k_1=2$, if $\ell:=2$, then $k_{2}=7$, if $\ell:=3$, then $k_{\ell}=14$, and so on, by the definition of the control that we consider (and by induction $k_{\ell}> 2\ell$ for all $2\leq\ell\in\N$). Then \beqref{eq:bounded} implies that $\|x^{k_{\ell}}\|\leq\alpha$ for all $\ell\in\N$.

We claim that there exists some $\wt{k}\in\N$ sufficiently large  such that $\|Q^{k} y\|>\alpha$ for all $\wt{k}<k\in\N$ and all $y\in B[0,\alpha]$. Indeed, we first observe that the subset $Q(B[0,\alpha])$ is bounded (since it is the image of the bounded set $B[0,\alpha]$ by the nonexpansive mapping $Q$). 

Now we observe that $Q$ is strongly nonexpansive: indeed, $Q$ is the composition of two firmly nonexpansive mappings (the orthogonal projections $P_{C_2}$ and $P_{C_1}$), and hence $Q$ is a composition of two strongly nonexpansive mappings (since \cite[Proposition 2.1]{BruckReich1977jour} ensures that any firmly nonexpansive on a uniformly convex Banach space - and, therefore, on a Hilbert space - is strongly nonexpansive; see also \cite[Theorem 2.3.4, p. 92]{Cegielski2012book} for another generalization). Since the composition of strongly nonexpansive mappings is also strongly nonexpansive as follows from \cite[Proposition 1.1]{BruckReich1977jour} or \cite[Theorem 2.3.5(i), p. 93]{Cegielski2012book}, we conclude that indeed $Q$ is strongly nonexpansive. 

Next we observe that since $dist(C_1,C_2)$ is not attained, then \cite[Theorem 2]{CheneyGoldstein1959jour} ensures that $Q$ is fixed point free. Hence, since $Q$ is strongly nonexpansive, we can use Theorem \bref{thm:UniformInfty}\beqref{item:FixedPointFree} to obtain that $\lim_{q\to\infty} \|Q^q(\cdot)\|=\infty$ uniformly on nonempty and bounded sets of $X$. In particular, according to Theorem \bref{thm:UniformInfty}\beqref{item:FixedPointFree}, for the pair consisting of the nonempty and bounded set $Q(B[0,\alpha])$ and the positive number $\alpha$ there exists some $\wt{q}\in \N$ such that  $\|Q^q(z)\|>\alpha$ for all $\wt{q}<q\in\N$ and all $z\in Q(B[0,\alpha])$. As a result, if we   take an arbitrary $y\in B[0,\alpha]$, since the vector $z:=Q(y)$ is located in $Q(B[0,\alpha])$ it follows that $\|Q^q(z)\|>\alpha$ for all $\wt{q}<q\in\N$. Therefore if we let $\wt{k}:=\wt{q}+1$, then we see that $\|Q^{k}(y)\|>\alpha$ for all $\wt{k}<k\in\N$. 

Now let $\ell\in\N$ be large enough such that $\ell>\wt{k}$. If we denote  $y:=x^{k_{\ell}-2\ell}$, then $y\in B[0,\alpha]$ according to \beqref{eq:bounded}, and so by \beqref{eq:xkl}, the choices of $\wt{k}$, $k_{\ell}$ and the previous paragraph, we have $\|x^{k_{\ell}}\|=\|Q^{\ell}(y)\|>\alpha$, a contradiction to \beqref{eq:bounded}. This contradiction shows that $(x^k)_{k=0}^{\infty}$ cannot be bounded, as claimed.
\end{expl}

\begin{expl}\label{ex:Aleph}
As in Example \bref{ex:(C1C2)k(C_3)}, let $X$ be a real Hilbert space of dimension 2 or above (possibly infinite) and let $C_1$ and $C_2$ be any nonempty, disjoint, closed and convex subsets of $X$ having the property that the distance between them is not attained. Let $C_3,\ldots, C_m$ be arbitrary nonempty closed and convex subsets of $X$ and $3\leq m\in\N$. Let $I:=\{1,2,\ldots,m\}$. Let $(k_{\ell})_{\ell\in\N}$ be a strictly increasing sequence of natural numbers, and let $(j_{\ell})_{\ell\in\N}$ an arbitrary (not necessarily increasing) sequence of natural numbers. 

Consider an arbitrary control $c:\N\cup\{0\}\to I$ of the following form: 
\begin{equation*}
(12)^{k_1}(i_{1,1}i_{1,2}\ldots i_{1,j_1})(12)^{k_2}(i_{2,1},\ldots,i_{2,j_2})(12)^{k_3}(i_{3,1},\ldots,i_{3,j_3})\ldots,
\end{equation*}
where $i_{u,v}\in \{3,\ldots,m\}$ for all $u\in\N$, $v\in \{1,\ldots,j_u\}$. 

Let $T_k:=P_{C_{c(k)}}$ for every $k\in\N\cup\{0\}$, and let $(x^k)_{k=0}^{\infty}$ be defined as $x^{k+1}:=T_{k}(x^k)$ for all $k\in\N\cup\{0\}$, where $x^0\in X$ is arbitrary. By considerations similar (albeit longer) to the ones presented in Example \bref{ex:(C1C2)k(C_3)}, one can show that $(x^k)_{k=0}^{\infty}$ is unbounded.

What perhaps is noteworthy in this example is that any control of the above form is non-cyclic and not even almost cyclic, and hence whenever $C_3,\ldots, C_m$  are disjoint from $C_1$ and $C_2$ any sequence $(x^k)_{k=0}^{\infty}$ generated as above is non-cyclic and non-almost-cyclic. In addition, there are $\aleph$ controls of this form (for a given family $(C_i)_{i=1}^m$, $3\leq m\in\N$) since the cardinal number of the set $\mathcal{S}$ all strictly increasing sequences of natural numbers is $\aleph={\aleph_0}^{\aleph_0}$ (indeed, this known fact can be shown by using the following one-to-one bijection from the set $\N^{\N}$ of all sequences of natural numbers onto $\mathcal{S}$: $(s_k)_{k=1}^{\infty}\mapsto (\sum_{j=1}^k s_j)_{k=1}^{\infty}$ for each sequence $(s_k)_{k=1}^{\infty}\in \N^{\N}$). 
\end{expl}

\section*{Acknowledgments}
The authors thank Heinz Bauschke and Tran Thanh Tung for helpful discussions regarding \cite{BauschkeTung2026prep}. This work was supported by U.S. National Institutes of Health Grant Number R01CA266467 and by the Cooperation Program in Cancer Research of the German Cancer Research Center (DKFZ) and Israel’s Ministry of Innovation, Science and Technology (MOST).

\bibliographystyle{acm}
\bibliography{biblio}

\end{document}